\newcommand{\lt}{\left}
\newcommand{\rt}{\right}
\newcommand{\bpm}{\begin{pmatrix}}
\newcommand{\epm}{\end{pmatrix}}
\newcommand{\bsm}{\lt(\begin{smallmatrix}}
\newcommand{\esm}{\end{smallmatrix}\rt)}
\newcommand{\beq}{\begin{equation}}
\newcommand{\eeq}{\end{equation}}
\newcommand{\bmat}{\begin{matrix*}}
\newcommand{\emat}{\end{matrix*}}
\newcommand{\Z}{\ensuremath{\mathbb{Z}}}
\newcommand{\Q}{\ensuremath{\mathbb{Q}}}
\newcommand{\R}{\ensuremath{\mathbb{R}}}
\renewcommand{\a}{\mathfrak{a}}
\renewcommand{\b}{\mathfrak{b}}
\let\turc\c
\renewcommand{\c}{\mathfrak{c}}
\DeclareMathOperator{\SL}{SL}
\newcommand{\inv}{^{-1}}
\renewcommand{\Im}{\operatorname{Im}}
\newcommand{\kloostermanmodulus}{\gamma}
\newcommand{\mr}{\ensuremath{\mathbb R}}
\newcommand{\mymod}{\ensuremath{\negthickspace \negmedspace \pmod}}
\newcommand{\shortmod}{\ensuremath{\negthickspace \negthickspace \negthickspace \pmod}}
\newcommand{\sumstar}{\sideset{}{^*}\sum}
\theoremstyle{plain}            
        \newtheorem{mytheo}{Theorem} [section]
        \newtheorem{myprop}[mytheo]{Proposition}
        \newtheorem{mycoro}[mytheo]{Corollary}
     \newtheorem{mylemma}[mytheo]{Lemma}
        \newtheorem{mydefi}[mytheo]{Definition}
\numberwithin{equation}{section}
\numberwithin{figure}{section}
\title{Kloosterman sums and Fourier coefficients of Eisenstein series} 
\author{Eren Mehmet K{\i}ral, Matthew P. Young}
\address{Department of Mathematics \\
	  Texas A\&M University \\
	  College Station \\
	  TX 77843-3368 \\
		U.S.A.}
\email{ekiral@tamu.edu}		
\email{myoung@math.tamu.edu}
\thanks{This material is based upon work of M.Y. supported by the National Science Foundation under agreement No. DMS-1401008.  Any opinions, findings and conclusions or recommendations expressed in this material are those of the authors and do not necessarily reflect the views of the National Science Foundation. \\
 The first author was partially supported by an AMS-Simons travel grant.}
\begin{document}

\begin{abstract}
We derive explicit formulas for some Kloosterman sums on $\Gamma_0(N)$, and for the Fourier coefficients of Eisenstein series attached to arbitrary cusps, around a general Atkin-Lehner cusp.
\end{abstract}

\maketitle

\section{Motivation}
Many problems in analytic number theory rely on the spectral theory of automorphic forms for $GL_2$.  For instance, it is desirable to obtain cancellation in sums of Kloosterman sums, a goal first achieved by Kuznetsov \cite{KuznetsovSumOfKloosterman} for $\sum_{c \leq X} \frac{S(m,n;c)}{c}$ 
with the key tool being his famous spectral decomposition of sums of Kloosterman sums.  For arithmetical applications, one typically encounters Kloosterman sums with additional constraints.
As an example, for the problem of estimating the fifth moment of modular $L$-functions in the level aspect \cite{KiralYoung5thMoment}, we encountered
\begin{equation}
\label{eq:sumofkloostermanOriginalExample}
		\sum_{\substack{(c,N) = 1\\c \equiv 0 \shortmod{q}}} S(\overline{N} m,n;c)f(c),
	\end{equation}
	where $f$ is a smooth function on the positive reals with sufficient decay.  
	
	Deshouillers and Iwaniec \cite{DeshouillersIwaniec} greatly generalized Kuznetsov's arguments and in particular calculated $S_{\a  \b}(m,n;c)$ for a number of pairs of cusps $\a, \b$ for the modular group $\Gamma_0(N)$.  
	In an unpublished manuscript, Motohashi \cite{motohashi2007riemann} showed
	that \eqref{eq:sumofkloostermanOriginalExample}
	can be realized as the sum of Kloosterman sums associated to the pair of cusps $\infty, 1/q$ for the group $\Gamma_0(Nq)$.
	Moreover, the scaling matrix associated to the cusp $1/q$ may be chosen to be an Atkin-Lehner operator.
	One goal of this paper is to derive a much more general version of \eqref{eq:sumofkloostermanOriginalExample}, and to systematically choose Atkin-Lehner operators as scaling matrices.  Such a choice is important because if $f$ is a newform, then it is an eigenfunction of the Atkin-Lehner operators, and so the Fourier coefficients at a cusp $\a$ will be the same, up to a sign, as those at the cusp $\infty$, provided $\a$ is equivalent to $\infty$ under an Atkin-Lehner operator.  This property was crucially used in \cite{KiralYoung5thMoment} when studying \eqref{eq:sumofkloostermanOriginalExample}.  Our Kloosterman sum formula appears as Theorem \ref{thm:KloostermanCalculation} below.
	
Another problem in this theme is the explicit evaluation of Fourier coefficients of Eisenstein series attached to general cusps $\a$ on $\Gamma_0(N)$.  These quantities appear on the spectral side of the Bruggeman-Kuznetsov formula, and for some applications (e.g. \cite{KiralYoung5thMoment}) one needs rather precise information on these Fourier coefficients.  In this paper, we provide the evaluation of these Fourier coefficients, for which see Theorem \ref{thm:EisensteinFourierCoefficientFormulaDirichletCharacters} below.

This is a companion paper to \cite{KiralYoung5thMoment} which relies on the results in this paper.

\section{Kloosterman sums}	
	
\subsection{Definitions}	

We mostly follow the notation of \cite{IwaniecSpectralBook}. 
	Let $N$ be a positive integer and $\Gamma = \Gamma_0(N)$.
       An element $\a \in \mathbb{P}^1(\Q)$ is called a cusp. Two cusps $\a$ and $\a'$ are  equivalent under $\Gamma$ if there is a $\gamma \in \Gamma$ satisfying $\a' = \gamma \a.$
            Let $\a$ be a cusp and 
                $\Gamma_\a = \{ \gamma \in \Gamma : \gamma \a = \a\}$
            be the stabilizer of the cusp $\a$ in $\Gamma$. 
            A matrix $\sigma_\a \in \SL_2(\R)$, satisfying      
            \begin{equation} \label{eq:scalingMatrixProperties}
                \sigma_\a \infty = \a, \quad \text{ and } \quad \sigma_\a\inv \Gamma_\a \sigma_\a = \left\{ \pm \left(\begin{smallmatrix} 1 & n\\ 0&1 \end{smallmatrix} \right) : n \in \Z\right\}
            \end{equation}
            is called a scaling matrix for the cusp $\a$. 
            
        {\bf Remarks.} For two equivalent cusps $\a$ and $\a' = \gamma \a$, with $\gamma \in \Gamma$, the stabilizers of the cusps are conjugate subgroups in $\Gamma$, namely $\Gamma_{\a'} = \gamma \Gamma_\a \gamma\inv$. 
        Furthermore, $\sigma_{\a'} = \gamma \sigma_\a$ 
        is a scaling matrix for the cusp $\a'$. 
                 The matrix $\sigma_\a$ is not uniquely defined by the above properties: for any $\alpha \in \R$, $$\sigma_\a\bsm 1&\alpha\\ 0&1\esm$$ also satisfies
        \eqref{eq:scalingMatrixProperties}. The choice of the scaling matrix $\sigma_\a$ is important in what follows.
        
        \begin{mydefi}
             Let $f$ be a Maass form for the group $\Gamma$. The Fourier coefficients of $f$ at a cusp $\a$, \emph{relative to a choice of $\sigma_\a$}, and denoted $\rho_f(\sigma_{\a}, n)$, are defined by 
             \begin{equation}
              \label{eq:FourierExpansionMaassForm}
              f(\sigma_\a z) = \sum_{n \neq 0} \rho_f(\sigma_\a, n) e(nx)
                  W_{0, i t_j}(4 \pi |n| y),
             \end{equation}
             where
             \begin{equation*}
              W_{0,it_j}(4 \pi y) = 2 \sqrt{|y|} K_{i t_j} (2 \pi y).
             \end{equation*}

        \end{mydefi}
  {\bf Remark.} If $\sigma_\a$ is replaced with $\sigma_\a \bsm 1&\alpha \\ 0&1 \esm$, then the Fourier coefficients relative to 
        the new scaling matrix are given by 
        \begin{equation*}
              \rho_f(\sigma_a \bsm 1&\alpha\\ 0&1\esm, n) = e(n\alpha) \rho_f( \sigma_\a, n).
         \end{equation*} 
         If the Fourier coefficients of $f$ at a cusp $\a$ are multiplicative with respect to $\sigma_\a$, 
         then for another scaling matrix as above, the Fourier coefficients will typically not be multiplicative.  We found the reference \cite{GoldfeldHundleyLee2015Multiplicative} useful for its discussions in this context.
         
         
         Fourier coefficients at equivalent cusps, however, behave more predictably. If $\a' = \gamma \a$ and we choose the scaling matrix  
         as $\sigma_{\a'} = \gamma \sigma_{\a}$, then due to $\Gamma$-invariance of $f$, we have
         \begin{equation*}
              \rho_f(\sigma_{\a'}, n) = \rho_f(\sigma_\a, n).
         \end{equation*}
 When the scaling matrix $\sigma_{\a}$ is understood, we may write $\rho_{f}(\sigma_{\a}, n) = \rho_{\a, f}(n)$. 

        We now define Kloosterman sums with respect to a pair of cusps, and for general nebentypus.
Let $\chi$ be a Dirichlet character modulo $N$. We extend $\chi$ to $\Gamma$ via
	\begin{align*}
		\chi: \Gamma &\longrightarrow S^1\\
			\gamma = \bsm a&b\\Nc&d \esm &\mapsto \chi(d).
	\end{align*}
	If $\chi$ is an even character, it can be seen as a multiplier system on $\Gamma$ with weight $0$. Let $\lambda_\a$ be defined by $\sigma_\a\inv \lambda_\a \sigma_\a = \bsm 1&1\\0&1 \esm$. We say that the cusp $\a$ is \emph{singular} for $\chi$ if $\chi(\lambda_\a) = 1$. 
	\begin{mydefi}
	\label{def:KloostermanSum}
              For $\a$ and $\b$ singular cusps for $\chi$, we define the Kloosterman sum associated to $\a,\b$ and $\chi$ with modulus $c$ as
            \begin{equation}\label{eq:kloostermanDefinition}
		S_{\a \b}  (m,n;c;\chi) =
		\sum_{\gamma = \bsm a & b\\ c &d \esm \in \Gamma_\infty \backslash \sigma_\a\inv \Gamma \sigma_\b / \Gamma_\infty }
		\overline{\chi(\sigma_\a \gamma \sigma_\b\inv)} e\left(\frac{am + dn }{c} \right).
            \end{equation}
        \end{mydefi}

{\bf Remarks.}  This definition of the Kloosterman sum slightly differs from that of \cite[(2.23)]{IwaniecSpectralBook}, in that the roles of $m$ and $n$ are reversed, but agrees with \cite[(3.13)]{IwaniecClassicalBook}.
 
	From Definition \ref{def:KloostermanSum} 
 	, and the fact that $\gamma' = -\gamma\inv$ parametrizes the double coset $\Gamma_\infty \backslash \sigma_\b\inv \Gamma \sigma_\a/ \Gamma_\infty$ as $\gamma$ runs over representatives of the double cosets in $\Gamma_\infty \backslash  \sigma_\a\inv \Gamma \sigma_\b /\Gamma_\infty$, we have
	\begin{equation}\label{eq:switchCusps}
		S_{\a  \b}(m,n;c;\chi) = \overline{S_{\b  \a}(n,m;c;\chi)}.
	\end{equation}
	This corrects a formula in \cite[p.48]{IwaniecSpectralBook} which omitted the complex conjugation. 

	\begin{mydefi} The set of allowed moduli is defined as
            \begin{equation}\label{eq:allowedKloostermanModuli}
                \mathcal{C}_{\a \b} = \left\{\kloostermanmodulus > 0: \bsm * &*\\ \kloostermanmodulus &* \esm \in \sigma_\a\inv \Gamma \sigma_\b \right\}.
            \end{equation}
        \end{mydefi}
        Notice that if $\kloostermanmodulus \notin \mathcal{C}_{\a \b}$ the Kloosterman sum of modulus $\kloostermanmodulus$ is an empty sum. 

	The definition \eqref{eq:kloostermanDefinition} is a natural one, as it occurs in the Fourier expansion of the Poincar\'{e} series which is defined as 
	\begin{equation}\label{eq:poincareSeriesDefinition}
		P_n^\a(z,s;\chi)
		= \sum_{\gamma \in \Gamma_\a \backslash \Gamma} 
		\overline{\chi(\gamma)} \Im(\sigma_\a\inv \gamma z)^s e(n \sigma_\a\inv \gamma z).
	\end{equation}
	Here  $P_n^\a(z,s;\chi)$ is $\Gamma$-automorphic of nebentypus $\chi$
	(to be clear, it transforms by $f(\gamma z) = \chi(\gamma) f(z)$).  

	{\bf Remark.} The Kloosterman sum associated to the pair of cusps $\a,\b$ depends on the choice of pair of scaling matrices $\sigma_\a$ and $\sigma_\b$
	(so it might be better to denote it as $S_{\sigma_\a, \sigma_\b}(m,n;c)$).  If one changes the choice of the scaling matrix, the Kloosterman sum
	also changes by 
	\begin{equation}
              S_{\sigma_\a \bsm 1&\alpha\\ 0&1\esm, \sigma_b \bsm 1& \beta \\ 0&1 \esm } (m,n;c;\chi) = e(-\alpha m +  \beta n) S_{\sigma_\a, \sigma_\b} (m,n;c; \chi). 
         \end{equation}
         This corrects a formula of \cite[p.48]{IwaniecSpectralBook} which has $\alpha$ in place of our $-\alpha$.
         However, changing the cusp $\a$ to an equivalent one does not alter the Kloosterman sum, if one also changes the scaling matrices accordingly. Indeed, if 
         $\a' = \gamma_1 \a$ and $\b' = \gamma_2 \b$ for $\gamma_1,\gamma_2 \in \Gamma$, then
         \begin{equation*}
              S_{\sigma_{\a},\sigma_\b}(m,n;c;\chi) = S_{\gamma_1\sigma_{\a}, \gamma_2\sigma_{\a} }(m,n;c;\chi).
         \end{equation*}
 
         If one applies the Bruggeman-Kuznetsov formula 
         to sums of Kloosterman sums associated to the choice of scaling matrices $\sigma_\a, \sigma_\b$, 
         then the Fourier coefficients at cusps $\a, \b$ appearing on the spectral side must also be computed using the same scaling matrices.

\subsection{Atkin-Lehner cusps and scaling matrices}\label{subsec:AtkinLehnerScaling}

        Assume that $N = rs$ with $(r,s) = 1$. We then call a cusp of the form $\mathfrak{a} = 1/r$ an \emph{Atkin-Lehner cusp}. 
        The Atkin-Lehner cusps are precisely those that are equivalent to $\infty$ under an Atkin-Lehner operator, justifying their name.
        In Proposition \ref{prop:stabilizerScaling} below, we calculate the stablizer of a general cusp $\a$, which when specialized to an Atkin-Lehner cusp shows that $\Gamma_{1/r}$ is generated by $\pm (\begin{smallmatrix} 1 - N & s \\ -r N & 1 + N \end{smallmatrix})$.
        In particular we see that any  even Dirichlet character $\chi \pmod N$ is singular at every Atkin-Lehner cusp.
        
        \begin{mydefi}
             Let $N = rs$ with $(r,s) =1$ as above, and let $W = W_s$ be the Atkin-Lehner operator defined by
             \begin{equation}\label{eq:AtkinLehnerDefinition}
                W= W_s = \bpm xs& y\\ zN & ws \epm,
             \end{equation}
             with $\det(W_s) = s$. On automorphic forms of (even) weight $\kappa$, the Atkin-Lehner operator is defined by $(f\vert_{W})(z) = \det(W)^{\frac{\kappa}{2}} j(W,z)^{-\kappa} f(Wz)$. 
        \end{mydefi}
        Any two choices of Atkin-Lehner operators $W_s$, for the same value of $s$, differ by an element of $\Gamma_0(N)$.  Therefore, if the nebentypus $\chi$ is trivial then $f \vert_{W_{s}}$ is independent of the choices of $x,y,z,w$.  For general nebentypus, one may ensure that $f\vert_{W_s}$ is independent of the choices under the assumptions
         $x \equiv 1 \pmod{r}$ and $z \equiv 1 \pmod{s}$. 

        The matrix can be normalized to have determinant $1$, without changing the operator, via
        \begin{equation*}
             \frac{1}{\sqrt{s}} W_s = \bpm x\sqrt{s}& y/\sqrt{s}\\ z r\sqrt{s} & w \sqrt{s}\epm = \bpm x & y\\ zr & ws \epm \bpm \sqrt{s} & 0 \\ 0& 1/\sqrt{s} \epm.
        \end{equation*}
        Here the determinant condition is $xws -rzy = 1$. We have the freedom to choose $x = z = 1$, and then if $\overline{s}$ is
        any integer that satisfies $s \overline{s} \equiv 1 \pmod {r}$, put $w = \overline{s}$ and $y = (\overline{s}s - 1)/r$. Therefore the matrix
         \begin{equation} 
         \label{eq:AtkinLehnerScaling}
             \sigma_{1/r} = \tau_{r} \nu_s  \quad \text{ with } \quad \tau_{r} = \bpm 1& (\overline{s}s -1)/r \\ r & \overline{s}s\epm, \quad \nu_{s} = \bpm \sqrt{s} & 0\\ 0&1/\sqrt{s}\epm,
         \end{equation}
         is an acceptable choice for an Atkin-Lehner operator $W_s$.  Note $\tau_r \in \Gamma_0(r)$ (in particular, it has integer entries and determinant $1$).  From the theory of Atkin-Lehner operators, a newform $f$ of weight $0$ and trivial nebentypus will satisfy 
         \begin{equation}\label{eq:AtkinLehnerEigenfunction}
             f(\sigma_{1/r} z) = \eta_s(f) f(z),
         \end{equation}
         where $f \vert_{W_s} = \eta_{s}(f) f$ with $\eta_s(f) = \pm 1$. 

        One may check directly that $\sigma_{1/r}$ also satisfies the conditions in \eqref{eq:scalingMatrixProperties}, i.e. it is a scaling matrix for the cusp $1/r$. 
        If $f$ is a Maass form satisfying \eqref{eq:AtkinLehnerEigenfunction} then its Fourier coefficients at the cusp $\a$ with respect to the choice 
        \eqref{eq:AtkinLehnerScaling} for its scaling matrix has the Fourier coefficients 
        \begin{equation*}
             \rho_f(\sigma_{1/r} , n) = \pm 
             \rho_{\infty, f}(n)             
             .
        \end{equation*}
  For an application in \cite{KiralYoung5thMoment}, we need a more general version of this, which takes into account the translates $f{\vert_{\ell}}(z) := f(\ell z)$.
\begin{mylemma}
\label{lemma:FourierExpansionDifferentCusps}
Suppose $\mathfrak{a}$ is an Atkin-Lehner cusp of $\Gamma_0(N)$, and $f^*$ is a newform of trivial nebentypus and level $M$ with $LM=N$.  Then the set of lists of Fourier coefficients 
\{$(\rho_{\mathfrak{a}  f^*\vert_{\ell}}(n))_{n \in \mathbb{N}}$ : $\ell | L$\}
is, up to signs, the same as the set of lists of Fourier coefficients 
\{$(\rho_{\mathfrak{\infty}  f^*\vert_{\ell}}(n))_{n \in \mathbb{N}}$ : $\ell | L$\}.
\end{mylemma}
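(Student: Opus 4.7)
The plan is to exploit the fact that, for the Atkin-Lehner cusp $\mathfrak{a} = 1/r$ with $N = rs$ and $(r,s) = 1$, the scaling matrix $\sigma_{1/r} = \tau_r \nu_s$ from \eqref{eq:AtkinLehnerScaling} coincides, up to the factor $\nu_s = \mathrm{diag}(\sqrt{s}, 1/\sqrt{s})$, with an integral Atkin-Lehner operator $W_s^{(N)}$ on $\Gamma_0(N)$. In weight zero one has, for any Maass form $g$ on $\Gamma_0(N)$,
\begin{equation*}
\rho_{\mathfrak{a}, g}(n) = \rho_{\infty, \, g \vert W_s^{(N)}}(n).
\end{equation*}
Applied with $g = f^* \vert_\ell$, the lemma reduces to proving that the operator $W_s^{(N)}$ permutes the list $\{f^* \vert_\ell : \ell \mid L\}$ up to signs.

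Since $(r,s) = 1$, the newform level factors as $M = M_r M_s$ with $M_r \mid r$ and $M_s \mid s$, and correspondingly $L = L_r L_s$ with $L_r = r/M_r$, $L_s = s/M_s$. Every divisor $\ell \mid L$ factors uniquely as $\ell = \ell_r \ell_s$ with $\ell_r \mid L_r$, $\ell_s \mid L_s$. The main step is to establish
\begin{equation*}
(f^* \vert_\ell) \big\vert W_s^{(N)} = \eta_{M_s}(f^*) \cdot f^* \vert_{\ell'}, \qquad \ell' := \ell_r \cdot \tfrac{L_s}{\ell_s},
\end{equation*}
where $\eta_{M_s}(f^*) = \pm 1$ is the eigenvalue of the newform $f^*$ under the Atkin-Lehner operator $W_{M_s}^{(M)}$ at its own level $M$. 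I would prove this by exhibiting an explicit matrix decomposition
\begin{equation*}
\begin{pmatrix} \ell & 0 \\ 0 & 1 \end{pmatrix} W_s^{(N)} = \gamma \cdot W_{M_s}^{(M)} \cdot \begin{pmatrix} \ell' & 0 \\ 0 & 1 \end{pmatrix}, \quad \gamma \in \Gamma_0(M),
\end{equation*}
and then applying $f^*$ to both sides: its $\Gamma_0(M)$-invariance annihilates $\gamma$, while its Atkin-Lehner eigenvalue at level $M$ produces the factor $\eta_{M_s}(f^*)$. The entries of $\gamma$ are determined by equating matrix entries; the verification that $\gamma$ lies in $\Gamma_0(M)$ amounts to a small set of congruences that follow from the coprimalities $(\ell_r, s) = 1$, $(L_s/\ell_s, r) = 1$, together with the Bezout identity defining $W_s^{(N)}$.

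Because $\ell_s \mapsto L_s/\ell_s$ is an involution on divisors of $L_s$, the map $\ell \mapsto \ell'$ is an involution on divisors of $L$. Consequently the two sets of Fourier-coefficient lists in the statement of the lemma match up to overall signs, as $\ell$ and $\ell'$ each range over all divisors of $L$. The principal obstacle is producing the explicit matrix decomposition: one must carefully track the splittings $r = M_r L_r$ and $s = M_s L_s$ and verify that the lower-left entry of the candidate $\gamma$ is divisible by both $M_r$ and $M_s$ (hence by $M$). Once that decomposition is written down, the remaining argument is a formal translation between the identity $f^* \vert_\ell(\sigma_{1/r} z) = \pm f^* \vert_{\ell'}(z)$ and the corresponding equality of Fourier coefficients via \eqref{eq:FourierExpansionMaassForm}.
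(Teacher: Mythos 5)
Your overall strategy matches the paper's: identify the Fourier coefficients at $\mathfrak{a}=1/r$ with those of the $W_s$-translate at $\infty$ (valid because the scaling matrix \eqref{eq:AtkinLehnerScaling} is $s^{-1/2}W_s$ and the weight is $0$), then show that $W_s$ permutes $\{f^*\vert_\ell : \ell\mid L\}$ up to signs via an explicit involution on the divisors of $L$. Your formula $\ell'=\ell_r\,(L_s/\ell_s)$ and the sign $\eta_{M_s}(f^*)$ agree, prime by prime, with the paper's key identity \eqref{eq:f*slashing}, and the involution observation is the same. Where you diverge is in how that key identity is to be proved: the paper factors $W_s$ into prime-power operators $W_{p^{\alpha}}$ and invokes \cite[Lemma 26]{Atkin-Lehner} for each prime separately (with a case split on whether $\gamma\le\beta/2$), whereas you propose a single composite-modulus matrix decomposition. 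That is a legitimate alternative, but it is exactly the nontrivial content of the lemma, and you have not carried it out.

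Two concrete problems with the decomposition as you display it. First, it cannot hold as a matrix identity: the left side has determinant $\ell s=\ell_r\ell_s M_s L_s$ while the right side has determinant $M_s\ell'=M_s\ell_r L_s/\ell_s$, so the two sides differ by the scalar $\ell_s$. In weight $0$ this does not affect the resulting identity of functions, but you must insert that scalar (or work projectively) before "equating matrix entries" can determine $\gamma$; as written the system has no solution. Second, the assertion that $\gamma\in\Gamma_0(M)$ follows from "a small set of congruences" is doing all the work: one must choose the free parameters in $W_s^{(N)}$ and $W_{M_s}^{(M)}$ compatibly so that $\gamma$ is integral with lower-left entry divisible by $M_r M_s$, and this is precisely the content of Atkin--Lehner's Lemma 26 (extended to composite $s$). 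Until that computation is written down, the proof is incomplete. If you do not want to reprove it from scratch, do what the paper does: decompose $W_s$ into the $W_{p^{\alpha}}$ and apply the published lemma one prime at a time, noting that the resulting per-prime maps $p^{\gamma}\mapsto p^{\beta-\gamma}$ compose to your involution and the per-prime eigenvalues multiply to $\eta_{M_s}(f^*)$.
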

\begin{proof}
Suppose that $p$ is prime, $p|N$, $p^{\alpha} || N$, $p^{\beta} || L$ (so $p^{\alpha-\beta} || M$), and $p^{\gamma} || \ell$.  Let $W_{p^{\alpha}}$ be the Atkin-Lehner involution for $\Gamma_0(N)$ for the prime $p$.  Then the key fact we need is
\begin{equation}
\label{eq:f*slashing}
(f^* \vert_{\ell}) \vert_{W_{p^{\alpha}}} = \eta_{p^{\alpha-\beta}}(f^*) \thinspace f^* \vert_{\ell'}, 
\end{equation}
where 
$\ell'$ is defined by $\ell' = p^{\beta-\gamma} h$ where $\ell = p^{\gamma} h$, (so $(h,p)= 1$).  Note that the map $\ell \rightarrow \ell'$ is an involution, permuting the divisors of $L$.  Taking \eqref{eq:f*slashing} for granted for a moment, we may complete the proof of Lemma \ref{lemma:FourierExpansionDifferentCusps}, by noting that the Fourier coefficients of $f^* \vert_{\ell}$ at an Atkin-Lehner cusp $\mathfrak{a}$ are equal to the Fourier coefficients of $(f^* \vert_{\ell} )\vert_{W_D}$ for some Atkin-Lehner involution with $D|N$, which is a composition of $W_{p^{\alpha}}$'s.  The lemma follows from repeated usage of \eqref{eq:f*slashing}.

Now we prove \eqref{eq:f*slashing}.  First suppose $\gamma \leq \beta/2$, and let $\ell' = p^{\beta-2\gamma} \ell = p^{\beta-\gamma} h$.    Then by \cite[Lemma 26]{Atkin-Lehner},
\begin{equation}
\label{eq:AtkinLehnerLemma26Formula}
 (f^* \vert_{\ell}) \vert_{W_{p^{\alpha}}} = (f^* \vert_{W'_{p^{\alpha-\beta}}}) \vert_{{\ell'}} = \eta_{p^{\alpha-\beta}}(f^*) f^* \vert_{\ell'},
\end{equation}
where $W'_{p^{\alpha-\beta}}$ is the Atkin-Lehner involution on $\Gamma_0(M)$ (technically, they worked with holomorphic forms but their proof works equally well for Maass forms).  This proves the claim under the condition $\gamma \leq \beta/2$.  If $\gamma > \beta/2$, then one may reverse the roles of $\ell$ and $\ell'$ and apply $W_{p^{\alpha}}$ to both sides of \eqref{eq:AtkinLehnerLemma26Formula} to give the result.
\end{proof}
        
\subsection{Kloosterman sums using Atkin-Lehner scaling}
        
        \begin{myprop} \label{prop:KloostermanSumsAtInfinityAndAtkinLehnerCusp}
             Let $N = rs$ with $(r,s) = 1$, and choose $\sigma_{1/r}$ as in \eqref{eq:AtkinLehnerScaling}. Then the set of allowed moduli for the pair 
             of cusps $\infty, \tfrac{1}{r}$ is 
             \begin{equation} \label{eq:infinity1rAllowedModuli}
                  \mathcal{C}_{\infty, 1/r} = \left\{\kloostermanmodulus = c \sqrt{s} > 0 : c \equiv 0 \mymod{r}, \thinspace (c,s) = 1\right\},
             \end{equation}
             and for such $\kloostermanmodulus = c\sqrt{s} \in \mathcal{C}_{\infty,1/r}$, the Kloosterman sum to modulus $\kloostermanmodulus$ is given by
             \begin{equation}
                  S_{\infty,1/r}  (m,n;c\sqrt{s}) = S(\overline{s} m,n;c),
             \end{equation}
            where the $S(a,b;c)$ on the right denotes the ordinary Kloosterman sum.
        \end{myprop}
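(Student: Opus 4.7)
My approach is to multiply out $\gamma \sigma_{1/r}$ explicitly for $\gamma = \bsm a & b \\ eN & d \esm \in \Gamma_0(N)$ (using $e$ in place of the usual $c$ to free that letter for the modulus), read off the bottom-left entry to identify $\mathcal{C}_{\infty,1/r}$, and then evaluate the resulting sum by parametrizing the double coset modulo $\Gamma_\infty$ on each side. A direct matrix multiplication with $\sigma_{1/r} = \tau_r \nu_s$ as in \eqref{eq:AtkinLehnerScaling} gives
$$\gamma \sigma_{1/r} = \bpm (a+br)\sqrt{s} & * \\ r(es+d)\sqrt{s} & (\overline{s}(es+d)-e)\sqrt{s} \epm,$$
where $\overline{s}$ is the fixed inverse of $s$ modulo $r$ from \eqref{eq:AtkinLehnerScaling}. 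Since $ad - beN = 1$ forces $\gcd(d,N)=1$, we obtain $\gcd(es+d,s)=\gcd(d,s)=1$, which gives one inclusion in \eqref{eq:infinity1rAllowedModuli}.

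Introduce the abbreviations $\tilde{c} = r(es+d)$, $A_0 = a+br$, and $D_0 = \overline{s}(es+d)-e$, so the entries of $\gamma \sigma_{1/r}$ are $A_0\sqrt{s}$, $\tilde{c}\sqrt{s}$, $D_0\sqrt{s}$ together with an irrelevant top-right entry. Left (resp.\ right) multiplication by $\Gamma_\infty$ reduces $A_0$ (resp.\ $D_0$) modulo $\tilde{c}$, while the determinant relation $\det(\gamma\sigma_{1/r})=1$ translates to the congruence $sA_0D_0 \equiv 1 \pmod{\tilde{c}}$. The main obstacle is to verify that \emph{every} pair $(A_0,D_0)\bmod \tilde{c}$ satisfying this congruence actually arises from some $\gamma\in\Gamma_0(N)$; this will simultaneously establish the reverse inclusion in \eqref{eq:infinity1rAllowedModuli}. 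I plan to handle this constructively. Given integer lifts of $A_0,D_0$, set $c''=\tilde{c}/r$, $e = \overline{s}c''-D_0$, and $d = sD_0 - k\tilde{c}$ with $k=(s\overline{s}-1)/r\in\Z$; then $d\equiv sD_0 \pmod{\tilde{c}}$ yields $A_0 d\equiv sA_0D_0\equiv 1\pmod{\tilde{c}}$, so $b = (A_0 d-1)/\tilde{c}$ is an integer, and with $a = A_0 - br$ the identity $rd+eN=\tilde{c}$ gives $ad-beN = A_0 d - b\tilde{c} = 1$. Hence $\gamma\in\Gamma_0(N)$ and $\gamma\sigma_{1/r}$ has the prescribed entries.

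With the parametrization in hand and the observation that $(Am+Dn)/C = (A_0 m + D_0 n)/\tilde{c}$, the Kloosterman sum becomes
$$S_{\infty,1/r}(m,n;\tilde{c}\sqrt{s}) = \sum_{A_0\in(\Z/\tilde{c})^*} e\Big(\frac{A_0 m + s^{-1}A_0^{-1}n}{\tilde{c}}\Big),$$
where $s^{-1}$ denotes the inverse of $s$ modulo $\tilde{c}$ (which exists since $\gcd(s,\tilde{c})=1$). The substitution $A_0 = s^{-1}x$ with $x\in(\Z/\tilde{c})^*$ (a bijection since $\gcd(s,\tilde{c})=1$) yields $s^{-1}A_0^{-1}=x^{-1}$, so the sum collapses to $\sum_x e((s^{-1}m\cdot x + n\cdot x^{-1})/\tilde{c}) = S(s^{-1}m,n;\tilde{c})$. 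Setting $c = \tilde{c}$ and interpreting $\overline{s}$ in the statement as the inverse of $s$ modulo $c$ (the natural reading inside a Kloosterman sum of modulus $c$), this is precisely the claimed identity.
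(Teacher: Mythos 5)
Your argument is correct, and it is essentially the computation the paper itself performs: the paper obtains this proposition as the specialization $q=v=1$ of Theorem \ref{thm:KloostermanCalculation}, whose proof likewise identifies the double coset $\sigma_\infty^{-1}\Gamma\sigma_{1/r}$ explicitly, parametrizes $\Gamma_\infty\backslash\,\cdot\,/\Gamma_\infty$ by the residues of the diagonal entries modulo the lower-left entry, and removes the factor of $s$ by the change of variables $A_0\mapsto \overline{s}x$. Your explicit constructive verification that every pair $(A_0,D_0)$ with $sA_0D_0\equiv 1\pmod{\tilde c}$ is realized by some $\gamma\in\Gamma_0(N)$ is a welcome level of detail that the paper's general proof treats more tersely, and your reading of $\overline{s}$ as the inverse modulo $c$ is the intended one.
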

        
        {\bf Remark.} This is the same Kloosterman sum as  \cite[(14.8)]{motohashi2007riemann}, but differs from the computation in 
        \cite[Section 4.2]{IwaniecClassicalBook} by the presence of an additive character. This difference is due to the differing choices of the scaling matrices. Motohashi's 
        choice of scaling matrix corresponds to the Atkin-Lehner operators as in Section \ref{subsec:AtkinLehnerScaling} above. Proposition\ref{prop:KloostermanSumsAtInfinityAndAtkinLehnerCusp} is a special case of the following more general Theorem \ref{thm:KloostermanCalculation}, which 
evaluates the Kloosterman sum that 
        is associated to the pair of Atkin-Lehner cusps $1/r_1$ and $1/r_2$ 
        and with a character $\chi \pmod N$. 
        The evaluations should prove useful for other works.
        
        \begin{mytheo}
        \label{thm:KloostermanCalculation}
             Let $N = pquv$ with $p,q,u,v$ all pairwise coprime. Put $r_1 = pu, s_1 = qv$ and $r_2 = pv, s_2= qu$. 
             The set of allowed moduli for the pair of cusps $1/r_1$, $1/r_2$ is given as
             \begin{equation}\label{eq:generalAtkinLehnerCuspAllowedModuli}
                  \mathcal{C}_{1/r_1, 1/r_2 } = \left\{\kloostermanmodulus =  c\sqrt{uv} > 0: c \equiv 0 \mymod{pq}, \thinspace (c,uv)=1\right\}.
             \end{equation}
             Let $\chi$ be an even character modulo $N$, and factor it as $\chi = \chi_p \chi_q \chi_u \chi_v$ where $\chi_p$ is a
             character modulo $p$,  $\chi_q$ is a character modulo $q$, etc. The Kloosterman sum for this 
             pair of cusps and character $\chi$ with modulus $\kloostermanmodulus = c\sqrt{uv} \in \mathcal{C}_{1/r_1,1/r_2}$ is given as 
             \begin{equation}\label{eq:KloostermanSumAtAtkinLehnerCuspsAndCharacter}
                  S_{1/r_1,1/r_2}  (m,n;c\sqrt{uv} ;\chi) = \mathfrak{f}\, 
                  \overline{\chi_u}(c) \chi_v(c)
                  \sum_{\substack{a, d \shortmod{c}\\ ad\equiv 1 \shortmod{c}}} \overline{\chi_p}(d)\overline{\chi_q}(a) e\left(\frac{a \overline{uv}m  + dn}{c}\right),
             \end{equation}
             where 
             \begin{equation} \label{eq:extraChis}
                  \mathfrak{f}= \mathfrak{f}(p,q,u,v,\chi) = \chi_v(-1) \overline{\chi_p\chi_v} (u) \chi_q\chi_u(v) \chi_u(pq) \overline{\chi_v}(pq).
             \end{equation}
        \end{mytheo}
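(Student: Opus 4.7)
The plan is to compute $\sigma_{1/r_1}^{-1}\gamma\sigma_{1/r_2}$ explicitly for $\gamma\in\Gamma_0(N)$, identify the allowed moduli, parameterize the double cosets, and then evaluate the characters by reducing modulo each of $p$, $q$, $u$, $v$ separately. Writing $\sigma_{1/r_i} = \tau_{r_i}\nu_{s_i}$ as in \eqref{eq:AtkinLehnerScaling} and setting $M' := \tau_{r_1}^{-1}\gamma\tau_{r_2} = \bsm A & B \\ C & D \esm \in \SL_2(\Z)$, the bottom-left entry of $\sigma_{1/r_1}^{-1}\gamma\sigma_{1/r_2} = \nu_{s_1}^{-1}M'\nu_{s_2}$ equals $Cq\sqrt{uv}$, so I would parameterize the moduli as $\kloostermanmodulus = c\sqrt{uv}$ with $c=Cq$. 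Computing $(\gamma)_{21}=(\tau_{r_1}M'\tau_{r_2}^{-1})_{21}$ modulo each prime, using $a_i' := \overline{s_i}s_i \equiv 1 \pmod{r_i}$ and $a_i'\equiv 0 \pmod{s_i}$, the condition $N\mid(\gamma)_{21}$ forces the divisibilities $v\mid A$, $q\mid B$, $p\mid C$, $u\mid D$; hence $pq\mid c$, and $(c,uv)=1$ follows from $(\alpha,N)=(\epsilon,N)=1$, yielding \eqref{eq:generalAtkinLehnerCuspAllowedModuli}.

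Writing $A=vA'$, $B=qB''$, $C=pk$ (so $c=pqk$), $D=uD'$, the determinant $AD-BC=1$ becomes $uvA'D' - pqkB'' = 1$, i.e.\ $A'D'\equiv\overline{uv}\pmod c$. The $\Gamma_\infty$-actions shift $A$ by multiples of $cv$ and $D$ by multiples of $cu$, so $(A',D')$ is parameterized modulo $(c,c)$. The additive character then simplifies very cleanly:
\[
e\!\left(\frac{am+dn}{\kloostermanmodulus}\right) = e\!\left(\frac{Am}{cv}+\frac{Dn}{cu}\right) = e\!\left(\frac{A'm+D'n}{c}\right).
\]
Changing variables $a\equiv uvA'\pmod c$ and $d\equiv D'\pmod c$ converts the congruence to $ad\equiv 1\pmod c$ and the exponential to $e\bigl((a\overline{uv}m+dn)/c\bigr)$, exactly matching \eqref{eq:KloostermanSumAtAtkinLehnerCuspsAndCharacter}.

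For the multiplicative character, $\chi(\sigma_{1/r_1}\gamma\sigma_{1/r_2}^{-1})=\chi(\gamma_0)=\chi(\epsilon_0)$ with $\gamma_0=\tau_{r_1}M'\tau_{r_2}^{-1}\in\Gamma$ and $\epsilon_0 = r_1 B + a_1' D - b_2'\bigl(r_1 A + a_1' C\bigr)$, where $b_2' = (a_2' - 1)/r_2$. Reducing $\epsilon_0$ modulo each of $p,q,u,v$---using that $b_2' \equiv -\overline{pv}$ modulo both $q$ and $u$, and that the $b_2'$-terms drop out entirely modulo $p$ and modulo $v$---one expects
\[
\epsilon_0 \equiv ud \pmod p,\quad \epsilon_0 \equiv a\overline{v} \pmod q,\quad \epsilon_0 \equiv c\,\overline{pqv} \pmod u,\quad \epsilon_0 \equiv -puq\,\overline{c} \pmod v.
\]
Inserting these into $\chi = \chi_p\chi_q\chi_u\chi_v$, taking the complex conjugate, and separating out $\overline{\chi_p}(d)\overline{\chi_q}(a)$ together with $\overline{\chi_u}(c)\chi_v(c)$ leaves a residual factor $\chi_v(-1)\overline{\chi_p}(u)\chi_q(v)\chi_u(pqv)\overline{\chi_v}(puq)$, which after routine rearrangement (using that $\chi_v(-1)=\pm 1$ is real) coincides with $\mathfrak{f}$ of \eqref{eq:extraChis}. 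The main obstacle is precisely this last bookkeeping: correctly tracking the inverses $\overline{pv}$, $\overline{pqv}$, $\overline{c}$ introduced during the prime-by-prime reductions and verifying that they recombine cleanly into the specific form of \eqref{eq:extraChis}.
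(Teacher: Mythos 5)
Your proposal is correct and follows essentially the same route as the paper: identify $\tau_{r_1}^{-1}\Gamma\tau_{r_2}$ by reducing modulo $p,q,u,v$ (your divisibilities $v\mid A$, $q\mid B$, $p\mid C$, $u\mid D$ are exactly the paper's $\bsm xv & yq \\ zp & wu\esm$ shape), parameterize the double cosets by the lower row with $A'D'uv\equiv 1\pmod c$, change variables $a\equiv uvA'$, and evaluate $\overline{\chi}$ on the lower-right entry of $\tau_{r_1}M'\tau_{r_2}^{-1}$ prime by prime. Your four congruences for $\epsilon_0$ agree with the paper's (after its substitution $x\mapsto x\overline{uv}$), and the residual factor you extract recombines to $\mathfrak{f}$ as claimed.
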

        
        {\bf Remarks.} One may directly verify that the explicit formula given by  \eqref{eq:KloostermanSumAtAtkinLehnerCuspsAndCharacter} and \eqref{eq:extraChis} also satisfies \eqref{eq:switchCusps} (recall that $\chi$ is even), which is a nice consistency check. Also, both cusps $1/r_1, 1/r_2$ are singular with respect to $\chi$.

        \begin{proof}
             Our proof closely follows that in \cite{motohashi2007riemann}. Consider the double coset $\sigma_{1/r_1}\inv \Gamma \sigma_{1/r_2}$, and
             recall the definitions of $\tau_r$ and $\nu_s$ from \eqref{eq:AtkinLehnerScaling}. We firstly claim 
             \begin{equation} \label{eq:tauDoubleCoset}
                  \tau_{r_1}\inv \Gamma \tau_{r_2} = \left\{\bpm xv & yq \\ zp & wu \epm \in \SL_2(\Z): x,y,z,w \in \Z \right\}.
             \end{equation}
             By reducing the entries of the product of matrices implicit in the left hand side of \eqref{eq:tauDoubleCoset} modulo $p$, $q$, $u$, and $v$ respectively, one 
             sees that in each case the lower-left, upper-right, lower-right, and upper-left entries vanish respectively.  Hence  
             the left hand side of \eqref{eq:tauDoubleCoset} is contained in the set on the right hand side. For the opposite inclusion, we have
             \[
                 \tau_{r_1} \bpm xv & yq \\ zp & wu \epm \tau_{r_2}\inv = \bpm 1&\frac{s_1\overline{s_1}-1}{r_1} \\ r_1& s_1\overline{s_1} \epm \bpm xv & yq \\ zp & wu \epm \bpm s_2\overline{s_2} & \frac{1-s_2\overline{s_2}}{r_2}\\ -r_2 & 1\epm. 
             \]
             Again by reducing modulo $p,q,u$ and $v$ (the reader may find it easiest to reduce prior to performing matrix multiplication), one obtains an upper triangular matrix in each case, whence the product is an element of $\Gamma_0(N) = \Gamma_0(pquv)$.

             Multiplying with the width-normalizing matrices $\nu_s$, we get
             \[
                  \sigma_{1/r_1}\inv \Gamma \sigma_{1/r_2} = \left\{\bpm x\sqrt{uv} &y/\sqrt{uv}\\ zpq \sqrt{uv} & w\sqrt{uv} \epm \in \SL_2(\mr): x,y,z,w \in \Z \right\}.
             \]
             The determinant condition reads as $xwuv - zpqy = 1$, and for this to be satisfied one needs $(z,uv) = 1$. 
             This shows that \eqref{eq:generalAtkinLehnerCuspAllowedModuli} indeed gives the allowable set of moduli.  
             
             Next we wish to decompose this double coset according to the action of $\Gamma_{\infty}$ on both the left and right, as in \cite[Theorem 2.7]{IwaniecSpectralBook}.         
             A full set of coset representatives for $\Gamma_\infty \backslash \sigma_{\frac{1}{r_1}}\inv \Gamma \sigma_{\frac{1}{r_2}} /\Gamma_\infty$ with a given lower-left entry $zpq\sqrt{uv}$ is given by
             \begin{multline}\label{eq:doubleCosetRepresentatives}
                  (\Gamma_\infty \backslash \sigma_{\frac1{r_1}}\inv \Gamma \sigma_{\frac{1}{r_2}} /\Gamma_\infty)
                  \cap \left\{ \left(\begin{smallmatrix} * & * \\ zpq \sqrt{uv} & * \end{smallmatrix} \right) \right\} 
                  \\
                  = \left\{  \bpm x\sqrt{uv} &*\\ zpq \sqrt{uv} & w\sqrt{uv}\epm \in \SL_2(\R): \begin{matrix} x, w \in (\Z/zpq \Z)^* \\ xw uv \equiv 1 \shortmod{zpq} \end{matrix} \right\}.
             \end{multline}
             Here  the condition $xw uv \equiv 1 \pmod{zpq}$   determines $w$ in terms of $x$, and automatically implies $(xw, zpg) = 1$.
              
             Because of the presence of a character, we need to know the lower-right entry of an element of $ \Gamma$ in terms of the integers $x,w,z$ from this double coset.
             Given $\rho = \bsm  x\sqrt{uv} &y/\sqrt{uv}\\ zpq \sqrt{uv} & w\sqrt{uv} \esm$, we compute the lower-right entry of $\bsm *&* \\ * &d \esm = \sigma_{\frac{1}{r_1}} \rho \sigma_{\frac{1}{r_2}}\inv$ 
             by brute-force calculation as
             \begin{align*}
             	d &= \frac{(1-s_2 \overline{s_2})}{r_2} (puxv + s_1 \overline{s_1} zp) + puqy + u s_1 \overline{s_1} w
              \\
              &= (1- qu \overline{qu}) (ux + q \overline{qv} z) + puqy + u qv \overline{qv} w.
             \end{align*}
Reducing this
             in each of the moduli $p,q,u,v$, we obtain
             \begin{align*}
                 d \equiv wu \pmod p, \qquad & \qquad d \equiv ux \pmod{q}, \\
                  d \equiv z\overline{v} \pmod u, \qquad & \qquad d \equiv y puq \equiv -\overline{zpq} puq \equiv -\overline{z}u \pmod v.
             \end{align*}
		Alternatively, one may reduce the matrices prior to the matrix multiplication.            
             
             The Kloosterman sum is then given by 
             \begin{equation*}
                 S_{\tfrac{1}{r_1}, \tfrac{1}{r_2}} (m,n;zpq \sqrt{uv} ;\chi) = \!\!\!\!\!\!\!\! \sum_{\big(\begin{smallmatrix} x\sqrt{uv}&*\\zpq\sqrt{uv} & w\sqrt{uv} \end{smallmatrix} \big) \in \Gamma_\infty \backslash \sigma_{\frac{1}{r_1}}\inv \Gamma \sigma_{\frac{1}{r_2} }/\Gamma_\infty  }
                 \!\!\!\!\!\!\!\!\!\!\!\!\!\!\!\!
                 \overline{\chi_p}(uw) \overline{\chi_q}(ux) \overline{\chi_u}(z\overline{v}) \overline{\chi_v}(-\overline{z}u) e\left(\frac{x m  + w n}{zpq}\right).
             \end{equation*}
             Using \eqref{eq:doubleCosetRepresentatives} and a change of variables $x \mapsto x \overline{uv}$, and with some simplifications, we obtain \eqref{eq:KloostermanSumAtAtkinLehnerCuspsAndCharacter}.
        \end{proof}

 {\bf Examples.}       Specializing Theorem \ref{thm:KloostermanCalculation} to $r_1 = N, r_2 = 1$, we obtain
        \begin{equation} \label{eq:zeroInfinityKloostermanSum}
            S_{\infty,0}  (m,n;c \sqrt{N};\chi) = \overline{\chi}(c) S(\overline{N}m,n;c),
        \end{equation}
        with $(c,N) = 1$.  More generally, we have
        \begin{equation}
            S_{\infty, \frac{1}{r}}  (m,n;c \sqrt{s};\chi)= \overline{\chi_r}(s) \chi_s(r) \overline{\chi_s}(c) \sum_{\substack{ a,d \shortmod{c}\\ ad \equiv 1 \shortmod{c} } } e\left(\frac{a \overline{s}m + d n}{c}\right) \overline{\chi_r}(d),
        \end{equation}
        with $r | c$ and $(c,s) = 1$, and additionally
        \begin{equation}
            S_{0,\frac{1}{r}}  (m,n;c \sqrt{r};\chi) = \chi_r(-1) \chi_s(r) \overline{\chi_r}(s) \chi_r(c) \sum_{\substack{a,d \shortmod{c} \\ ad \equiv 1 \shortmod{c}}} e\left(\frac{a \overline{r}m + dn}{c}\right) \overline{\chi_s}(a),
        \end{equation}
        with $s | c$ and $(c,r) =1$.  These formulas should be contrasted with \cite[p.58]{IwaniecClassicalBook} or \cite[Lemma 4.3]{drappeau2015sums}, which use a different choice of scaling matrices.
       	In \eqref{eq:zeroInfinityKloostermanSum} the occurrence of the factor $\overline{\chi}(c)$ with the \emph{modulus} of the Kloosterman sum is a nice feature of the pair of cusps  $\infty,0$ as opposed to the case
	\begin{equation}
		S_{\infty, \infty} (m,n;c;\chi) = \sum_{ad \equiv 1 \shortmod{c} } e\left(\frac{am + dn}{c}\right) \overline{\chi}(d).
	\end{equation}
	
	{\bf Remark.} 
	A simple application of \eqref{eq:zeroInfinityKloostermanSum} is that 
	for $(a,q) = 1$ we have
	\begin{multline}
	\label{eq:SumsofKloostermanSumsInResidue}
               \sum_{c \equiv a \shortmod{q}} S(m,n;c) f(c) = \frac{1}{\phi(q)}\sum_{\chi \shortmod q} \chi(a)  \sum_{(c,q) =1} \overline{\chi(c)} S(\overline{q} q m,n;c) f(c)\\
               =\frac{1}{\phi(q)}\sum_{\chi \shortmod{q}} \chi(a) \sum_{(c,q) =1 } S_{\infty,0} (qm,n;c\sqrt{q};\chi) f(c).
        \end{multline}
        The sum over $(c,q)=1$ is the set of all allowed moduli for the group $\Gamma_0(q)$ and the $\infty, 0$ cusp pair, so one may directly apply the Bruggeman-Kuznetsov formula to this type of sum.
	The analysis of \eqref{eq:SumsofKloostermanSumsInResidue} was first undertaken by Blomer and Mili\'{c}evi\'{c} 
	\cite{BlomerMilicevic2015KloostermanSumsInResidue}.  Their method makes use of $S_{\infty, \infty}(m,n;c;\chi) $, but they require the 
	use of twisted multiplicativity of Kloosterman sums, and the fact that $S_{\infty,\infty}(m,0;c;\chi)$ is a Gauss sum of the character $\chi$ with twist $m$, in order to spectrally decompose \eqref{eq:SumsofKloostermanSumsInResidue}. 

	
\section{Fourier coefficients of Eisenstein series}
\subsection{Definitions}
Let $\c$ be an arbitrary cusp for $\Gamma_0(N)$.  The Eisenstein series attached to $\c$ (a singular cusp for the nebentypus $\chi$) is defined by
\begin{equation}\label{eq:eisensteinSeriesDefinition}
		E_\c(z,s;\chi)
		= \sum_{\gamma \in \Gamma_\c \backslash \Gamma} \overline{\chi}(\gamma)
		 \Im(\sigma_\c\inv \gamma z)^s.
	\end{equation}

The Fourier expansion around the cusp $\a$ takes the form
\begin{equation}\label{eq:EisensteinFourierExpansion}
		E_\c(\sigma_\a z,u ;\chi) 
		= \delta_{\a\c} y^{u} + \rho_{\a\c}(0,u,\chi) y^{1 - u} 
			+ \sum_{n \neq 0} \rho_{\a\c}(n,u,\chi) e(nx) W_{0,u-\frac12}(4\pi |n| y).
	\end{equation}	 
	Consulting \cite[Theorem 3.4]{IwaniecSpectralBook}, we have
	\begin{equation}
	\label{eq:rhodef}
		\rho_{\a\c}(n,u, \chi)
		= \begin{cases}
		\phi_{\a\c}(n,u, \chi) \frac{\pi^u}{\Gamma(u)} |n|^{u - 1} ,  & \text{ if } n \neq 0
		\\ 
		 \delta_{\a\c}y^u + \phi_{\a\c}(u, \chi) y^{1-u}, & \text{ if } n = 0,
		 \end{cases}
	\end{equation}
	where
	\begin{equation}\label{eq:phiDefinition}
		\phi_{\a\c}(n,u, \chi) 
		= 
		\sum_{\substack{(\gamma,\delta) \text{ such that} \\ \rho =  \bsm *&*\\\gamma&\delta \esm \in  \Gamma_\infty \backslash \sigma_\c\inv \Gamma \sigma_\a /\Gamma_\infty  }} 
		\overline{\chi}(\sigma_\c \rho \sigma_\a\inv) 
		\frac{1}{\gamma^{2u}} e\left(\frac{n\delta}{\gamma}\right)
		= \sum_{\gamma \in \mathcal{C}_{\c \a}} \frac{S_{\c  \a}(0,n;\gamma, \chi)}{\gamma^{2u}} 
		,
	\end{equation}
	and $\phi_{\a\c}(u, \chi) = \phi_{\a\c}(0,u, \chi)$.  
Note that our ordering of the cusps in the notation $\rho_{\a \c}, \phi_{\a \c}$ is reversed from that of \cite{IwaniecSpectralBook}, and also that \cite[(3.22)]{IwaniecSpectralBook} should have $\mathcal{S}_{\a \c}(n,0;c)$ in place of $\mathcal{S}_{\a \c}(0,n;c)$ to be consistent with \cite[(2.23)]{IwaniecSpectralBook}.  In case $\chi$ is principal, we shall drop it from the notation.

The goal of this section is evaluate $\phi_{\a \c}(n,u)$ in explicit terms, for which see Theorem \ref{thm:EisensteinFourierCoefficientFormulaDirichletCharacters} below.

\subsection{Cusps, stabilizers, and scaling matrices}
	First we write down representatives from the set of $\Gamma$-equivalency classes of cusps. An explicit parametrization may be found in \cite[Proposition 2.6]{IwaniecClassicalBook}, however we prefer a different choice as follows.  
        \begin{myprop}
        \label{prop:cuspParameterization}
            Every cusp of $\Gamma_0(N)$ is equivalent to one of the form $\b = 1/w$ with 
            $1 \leq w \leq N$. Two cusps of the form $1/w$ and $1/v$ 
            are equivalent to each other if and only if
            \begin{equation}
            \label{eq:cuspRepresentativesCondition}
                (v,N) = (w,N), \quad \text{ and } \quad 
                \frac{v}{(v,N)} \equiv \frac{w}{(w,N)} \pmod{\big((w,N), \tfrac{N}{(w,N)} \big)}.
            \end{equation}
            
            A cusp of the form $p/q$ is equivalent to one of the form $1/w$ with 
            $w \equiv p'q \pmod{N}$ where $p' \equiv p \pmod{ (q,N)}$ and $(p',N) =1$. 
        \end{myprop}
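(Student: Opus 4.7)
The proposition makes three assertions: (a) every cusp is equivalent to some $1/w$ with $1\le w\le N$; (b) a criterion for when two cusps $1/v$ and $1/w$ are equivalent; and (c) an explicit recipe for the representative $1/w$ starting from a general $p/q$. My plan is to prove (a) and (c) simultaneously by a direct construction of the conjugating matrix, and then to prove (b) by a closely related calculation.

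For (a) and (c), beginning from $p/q$ in lowest terms, I would seek $\gamma = \bsm a & b \\ cN & d \esm \in \Gamma_0(N)$ with $\gamma(p/q) = 1/(cNp+dq)$, i.e.\ with $ap+bq=1$. This numerator condition is solvable since $(p,q)=1$ and forces $a\equiv\overline{p}\pmod{q}$; the determinant condition $ad-bcN=1$ then additionally requires $(a,N)=1$. The key step is to show that the joint system $a\equiv\overline{p}\pmod{q}$ and $(a,N)=1$ admits a solution. This is a CRT argument: at primes $\ell\mid(q,N)$ the congruence modulo $q$ already forces $\ell\nmid a$, because $(\overline{p},q)=1$; at primes $\ell\mid N$ with $\ell\nmid q$ the residue of $a$ modulo $\ell$ is independent of its residue modulo $q$, so a nonzero choice is possible. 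Having fixed such $a$, set $p'\equiv a^{-1}\pmod{N}$, which satisfies $(p',N)=1$; since $ap\equiv1\pmod{q}$, we also get $p'\equiv p\pmod{(q,N)}$. The determinant identity gives $d\equiv a^{-1}\equiv p'\pmod{N}$, hence $w = cNp+dq\equiv p'q\pmod{N}$, as claimed. Reducing $w$ modulo $N$ places it in $\{1,\dots,N\}$.

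For (b), necessity of $(v,N)=(w,N)$ rests on the observation that for any $\gamma\in\Gamma_0(N)$ and any lowest-terms cusp $p/q$, the image $(ap+bq)/(cNp+dq)$ is already in lowest terms (via the Bezout identities $p=d(ap+bq)-b(cNp+dq)$ and $q=a(cNp+dq)-cN(ap+bq)$), and its denominator's gcd with $N$ equals $(q,N)$ since $(d,N)=1$. For the mod condition, since $1/v$ and $1/w$ are already in lowest terms, I would seek $\gamma$ satisfying $\gamma(1/v)=1/w$ as an equality of fractions: the three equations $a+bv=1$, $cN+ev=w$, $ae-bcN=1$ eliminate cleanly to $e=1+bw$, $a=1-bv$, $cN=(w-v)-bvw$. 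Writing $d=(v,N)=(w,N)$, $v=dv'$, $w=dw'$ with $(v'w',N/d)=1$, the integrality of $c$ becomes the congruence $bdv'w'\equiv w'-v'\pmod{N/d}$, which is solvable in $b$ if and only if $(d,N/d)\mid(w'-v')$. Running this computation in both directions yields the biconditional.

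The main obstacle is the bookkeeping of the assorted gcd and coprimality conditions. The single most delicate point is the CRT argument in the first step, needed to produce an $a$ simultaneously satisfying a congruence mod $q$ and coprime to $N$; once that is in hand, the rest of the proof consists of systematic $\SL_2(\Z)$ manipulations.
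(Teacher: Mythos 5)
Your proposal is correct and follows essentially the same route as the paper: construct an explicit matrix in $\Gamma_0(N)$ via Bezout with the extra constraint $(a,N)=1$, track the denominator modulo $N$ to get both the representative $1/w$ and the congruence $w\equiv p'q \pmod N$, and reduce the equivalence of $1/v$ and $1/w$ to a linear congruence in the free parameter of the stabilizing family of matrices. The only cosmetic difference is in part (b), where you extract necessity and sufficiency simultaneously from the solvability condition $(d,N/d)\mid(w'-v')$ of a single congruence in $b$, while the paper runs the two directions as separate arguments; the underlying matrices are the same.
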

        \begin{proof}
            Let $\b = p/q$ be a cusp. We may take $(p,q) = 1$. Using Bezout's lemma choose $a,b\in\Z$
            such that $ap + bq =1$,  and $(a,N) = 1$.  Let $\gamma = \bigl( \begin{smallmatrix}
a&b\\ c&d
\end{smallmatrix} \bigr) \in  \Gamma_0(N)  $. This ensures that $\gamma \cdot \b$ is a rational number 
	    with numerator equal to $1$.
	   Replacing
	    $c$ by $c + aNt$ and $d$ with $d + bNt$, 
	    we have
	    \[
                \bpm a &b \\ c + aNt & d  + bNt  \epm \cdot \b = \frac{ap + bq}{(c+ aNt)p + (d + bNt)q} 
                = \frac{1}{cp + dq + Nt}.
            \]
            Hence the denominator may be chosen to lie in the interval $[1,N]$. Further
            note that the denominator is congruent to $dq$ modulo $N$. From
            \[
                d \equiv \overline{a} \pmod{N} \qquad \text{ and } \qquad a \equiv \overline{p} \pmod{q},
            \]
            we deduce that $d \equiv p \pmod{(N,q)}$.  Thus  we get the last statement in the 
            proposition.

            We have established that any cusp is equivalent to one of the form $1/w$ with
            $1 \leq w \leq N$. Now 
            suppose $1/v$ and $1/w$ are equivalent.
            Elements of the group $\Gamma$ send relatively prime integer pairs
            to other such pairs, so if
            \[
                \bpm a&b\\Nc&d \epm \cdot \frac{1}{w} = \frac{1}{v},
            \]
            then switching the signs on $a,b,c,d$ if necessary, we have
            \begin{equation}\label{eq:rTosEquation}
                a + bw = 1 \qquad \text{ and } \qquad Nc + dw = v.
            \end{equation}
            The latter equation implies $(v,N) = (dw, N)$. Since $(d,N) = 1$,
            we get that 
            \begin{equation}\label{eq:equivalenceCondition1}
                (v,N) =  (w,N) .
            \end{equation}
            The first equation in \eqref{eq:rTosEquation} implies $a \equiv 1 \pmod w$, which 
            means that $a \equiv 1 \pmod{(w,N)}$. Since the matrix has determinant $1$, then $ad \equiv 1 \pmod N$, and hence $ad \equiv 1 \pmod{(w,N)}$. Therefore
            \begin{equation}
            \label{eq:dcongruence}
                d \equiv 1 \pmod{(w,N)}.
            \end{equation}
            The second equation in \eqref{eq:rTosEquation} gives that 
            $dw \equiv v \pmod N$, equivalently
            \[
                d \frac{w}{(w,N)} \equiv \frac{v}{(v,N)} \pmod {\tfrac{N}{(w,N)} }.
            \]
            Then  \eqref{eq:dcongruence}  implies
            \begin{equation}  \label{eq:equivalenceCondition2}
                \frac{w}{(w,N)} \equiv \frac{v}{(v,N)} \pmod {\big((w,N) , \tfrac{N}{(w,N)}\big)}.
            \end{equation}
           Thus we have shown if $1/w$ and $1/v$ are equivalent, then \eqref{eq:cuspRepresentativesCondition} holds.
          
           Now suppose that $w,v$ satisfy \eqref{eq:cuspRepresentativesCondition}.
           Let
           \begin{equation*}
                v' = \frac{v}{(v,N)}, \quad w' = \frac{w}{(w,N)}, \quad N' = \frac{N}{(w,N)}.
            \end{equation*}
Then $v' \equiv w' \pmod{(N', (N,w))}$, and $(wv', N') = (w,N') = ((N,w), N')$.  Therefore there exist $b,c \in \mathbb{Z}$ so that $v'-w' = b wv' + cN'$.  That is, $v-w = bwv+ cN$.  Define $a=1-bw$, $d=1+bv$, and let $\gamma = \bigl( \begin{smallmatrix}
a&b\\ cN &d
\end{smallmatrix} \bigr)$, where one may check that $\gamma$ has determinant $1$, so $\gamma \in \Gamma_0(N)$.  Finally, one may directly verify that $\gamma \cdot \frac{1}{w} = \frac{1}{v}$.   
        \end{proof}
        
        In the notation of Proposition \ref{prop:cuspParameterization}, call $(w,N)= f$, and $w = uf$. 
   
        \begin{mycoro}
        \label{coro:cuspParameterization}
        		A complete set of representatives for the set of inequivalent cusps of $\Gamma_0(N)$ is given by $\frac{1}{uf}$ where $f$ runs over divisors of $N$, and $u$ runs modulo $(f,N/f)$, coprime to $(f,N/f)$,
        		and where we choose $u$ so that $(u,N) = 1$, after adding a suitable multiple of $(f,N/f)$.
        \end{mycoro}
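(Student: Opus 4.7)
The plan is to derive the corollary as essentially a rewriting of Proposition \ref{prop:cuspParameterization}. First I would take the list of representatives $\{1/w : 1 \leq w \leq N\}$ coming from that proposition and partition the values of $w$ according to $f := (w,N)$, which ranges over divisors of $N$. Writing $w = uf$, the definition of $f$ translates to the coprimality constraint $(u, N/f) = 1$, and the equivalence criterion \eqref{eq:cuspRepresentativesCondition} specializes, for two representatives sharing the same $f$, to the condition $u \equiv u' \pmod{(f, N/f)}$. Thus the inequivalent cusps with $(w,N) = f$ are parameterized by those residues $u \pmod{(f, N/f)}$ that admit at least one integer lift coprime to $N/f$.

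The remaining task is twofold: identify this set of residues with $(\mathbb{Z}/(f,N/f)\mathbb{Z})^{\times}$, and verify that one may further choose a lift satisfying the stronger condition $(u, N) = 1$. One direction is immediate, since any lift with $(u, N/f) = 1$ automatically satisfies $(u, (f, N/f)) = 1$, a property that depends only on the residue class modulo $(f, N/f)$. For the converse I would apply the Chinese Remainder Theorem: given $u_0$ with $(u_0, (f, N/f)) = 1$, construct $u \equiv u_0 \pmod{(f, N/f)}$ with $(u, N) = 1$ by choosing the $p$-component of $u$ separately at each prime $p \mid N$. Namely, for primes $p \mid (f, N/f)$ the hypothesis already forces $p \nmid u_0$ and hence $p \nmid u$, while for primes $p \mid N$ with $p \nmid (f, N/f)$ the congruence $u \equiv u_0 \pmod{(f, N/f)}$ places no restriction modulo $p$, so we are free to pick any unit modulo $p$.

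The only real obstacle is the reconciliation between the constraint $(u, N/f) = 1$ that naturally emerges from Proposition \ref{prop:cuspParameterization} and the cleaner constraint $(u, (f, N/f)) = 1$ stated in the corollary; the CRT argument above handles this reconciliation and simultaneously upgrades the representative to one with $(u, N) = 1$, as claimed after the phrase ``after adding a suitable multiple of $(f, N/f)$.'' Beyond this, the proof is bookkeeping and a direct appeal to Proposition \ref{prop:cuspParameterization}.
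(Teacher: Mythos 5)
Your proposal is correct and follows exactly the route the paper intends: the corollary is stated as an immediate consequence of Proposition \ref{prop:cuspParameterization}, and your bookkeeping (partitioning $w=uf$ by $f=(w,N)$, noting $(w,N)=f \Leftrightarrow (u,N/f)=1$, specializing \eqref{eq:cuspRepresentativesCondition} to $u\equiv u' \pmod{(f,N/f)}$, and using CRT to lift a residue coprime to $(f,N/f)$ to an integer coprime to $N$) is precisely the omitted verification. No gaps.
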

        The cusp $0$ is equivalent to $1/1$ and $\infty$ is equivalent to $1/N$, furthermore $1/uf$ 
        is equivalent to the cusp $u/f$ provided $u$ is coprime to $N$.
        
        {\bf Remark.} It is not true that cusps of the form $u/f$ and $1/uf$ are always equivalent, even if $(u,f) = 1$. 
        For example let $N  =72$, and $f = 3$. We have $\frac{2}{3} \not\sim_{\Gamma} \frac{1}{6}$, however it is true that $\frac{2}{3} \sim_\Gamma \frac{5}{ 3} \sim_\Gamma \frac{1}{15}$.
        

We need to compute the stabilizers of various cusps. 
        \begin{myprop}\label{prop:stabilizerScaling}
            Let $\c= 1/w$ be a cusp of $\Gamma = \Gamma_0(N)$, and set
            \begin{equation}
            \label{eq:Nandwformulas}
              N = (N,w) N'   \qquad w = (N,w) w', 
              \qquad
              N' = (N',w) N''.  
            \end{equation}
            The stabilizer of $1/w$ is given as 
            \begin{equation}
            \label{eq:stabilizerFormula}
                \Gamma_{1/w} = \left\{ \pm \lambda_{1/w}^t  : t \in \Z \right\}, \quad \text{where} \quad \lambda_{1/w}^t = \bpm 1 - wN'' t  & N'' t \\ -w^2 N'' t & 1+ wN'' t \epm,
            \end{equation}
            and one may choose the scaling matrix as
           \begin{equation}\label{eq:anyCuspScaling}
               \sigma_{1/w} = \bpm 1&0\\ w&1 \epm  \bpm \sqrt{N''}  &0\\ 0 &1/\sqrt{N''} \epm .
           \end{equation}
        \end{myprop}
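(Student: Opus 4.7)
The plan is to characterize $\Gamma_{1/w}$ by directly writing the two conditions that an element $\gamma = \bsm a & b \\ Nc & d \esm \in \Gamma_0(N)$ must satisfy, and then to verify that the stated matrix $\sigma_{1/w}$ is a scaling matrix by a direct computation.

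First I would translate the fixing condition $\gamma \cdot \frac{1}{w} = \frac{1}{w}$ into $w(a-d) + bw^2 = Nc$, and combine it with the determinant equation $ad - bNc = 1$. Since $1/w$ is a rational cusp, its stabilizer in $\SL_2(\R)$ is parabolic, so $a + d = \pm 2$; writing $a = \pm 1 + \alpha$, $d = \pm 1 - \alpha$ and substituting into $ad - bNc = 1$ gives $\alpha^2 = -bNc$. Substituting into the fixing equation yields $(bw + \alpha)^2 = 0$, hence $\alpha = -bw$. Thus every element of $\Gamma_{1/w}$ is forced to have the shape
\[
\gamma = \pm \bpm 1 - bw & b \\ -bw^2 & 1 + bw \epm,
\]
so the entire question reduces to: which integers $b$ are allowed?

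The allowed $b$ are exactly those for which $Nc = -bw^2$ gives $c \in \Z$, i.e.\ for which $N \mid bw^2$. Here the key arithmetic step, and the main obstacle, is to track the divisibility through the chain $N = (N,w)N'$, $w = (N,w)w'$, $N' = (N',w)N''$. I would first observe that $(N',w') = 1$ (any common prime would lift to a larger common factor of $N$ and $w$ than $(N,w)$, a contradiction), so that $(N',w) = (N',(N,w))$, and therefore $N'' = N'/(N',(N,w))$ is coprime to $w'/(\text{shared stuff})$; more precisely, setting $e = (N',(N,w))$ and $g = (N,w)/e$, I have $(N'',g)=1$ and $(N'',w')=1$. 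The condition $N \mid bw^2$ then simplifies to $N'' \mid b$, yielding $b = N'' t$ and the formula \eqref{eq:stabilizerFormula}.

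For the scaling matrix, I would compute directly: $\sigma_{1/w} \cdot \infty = \sqrt{N''}/(w\sqrt{N''}) = 1/w$, and then verify that
\[
\sigma_{1/w}^{-1} \lambda_{1/w} \sigma_{1/w} = \bpm 1 & 1 \\ 0 & 1 \epm
\]
by carrying out the two $2\times 2$ matrix products; the $N''$ and $1/N''$ factors from the second factor of $\sigma_{1/w}$ are precisely what cancels the $N''$ in the entries of $\lambda_{1/w}$. Combined with the classification of $\Gamma_{1/w}$ obtained above, this shows $\sigma_{1/w}^{-1} \Gamma_{1/w} \sigma_{1/w} = \{\pm \bsm 1 & n \\ 0 & 1 \esm : n \in \Z\}$, completing the proof.
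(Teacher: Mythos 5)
Your proof is correct and arrives at the same parametrized family and the same divisibility criterion $N\mid bw^{2}\iff N''\mid b$ as the paper, but by a genuinely different route. The paper never invokes parabolicity: it uses the fact that $\gamma\in\SL_2(\Z)$ sends the coprime pair $(1,w)$ to a coprime pair, so $\gamma\cdot\tfrac1w=\tfrac1w$ forces $a+bw=\pm1$ and $cN+dw=\pm w$ outright, and then solves the linear system by tracking divisibility through the lower-left entry (from $(N',w')=1$ it gets $w'\mid c$, and the determinant condition then gives $bw=-rN'$, hence $b=N''t$). You instead pin down the shape of $\gamma$ from the trace condition $a+d=\pm2$ together with $(\alpha+bw)^{2}=0$, and then attack $N\mid bw^{2}$ directly; your gcd bookkeeping ($(N'',w')=1$ and $(N'',(N,w)/e)=1$ with $e=(N',(N,w))$) is correct and arguably cleaner, since it isolates the arithmetic from the matrix algebra. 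One imprecision you should repair: the stabilizer of $1/w$ in $\SL_2(\R)$ is a full Borel subgroup and contains hyperbolic elements, so ``its stabilizer in $\SL_2(\R)$ is parabolic'' is not literally true. What you need is either the standard fact that the stabilizer of a cusp in the \emph{discrete} group $\Gamma_0(N)$ consists of parabolic elements and $\pm I$, or, more elementarily, the observation that an element of $\SL_2(\Z)$ fixing a rational point has fixed-point quadratic with rational roots, so its discriminant $(a+d)^{2}-4$ is a perfect square, forcing $a+d=\pm2$ (the lower-triangular-zero case forces $a=d=\pm1$ directly). With that justification supplied, the rest of your argument, including the direct verification that $\sigma_{1/w}^{-1}\lambda_{1/w}\sigma_{1/w}=\bsm 1&1\\0&1\esm$, matches the paper's.
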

        
	{\bf Remark.}  
	This is not the choice of scaling matrix we made in Section \ref{subsec:AtkinLehnerScaling} for an Atkin-Lehner cusp. When computing the Fourier coefficients of $E_{\c}(\sigma_\a z, u)$, with $\a=1/r$ Atkin-Lehner and $\c$ arbitrary, we will choose \eqref{eq:anyCuspScaling} for $\sigma_\c$, and \eqref{eq:AtkinLehnerScaling} for $\sigma_\a$. In addition, one should observe that $N | w^2 N''$ to see that $\lambda_{1/w} \in \Gamma_0(N)$. As an aside, note that $N''$ is the width of the cusp $1/w$. 

        \begin{proof}
       Taking $\gamma = \bigl( \begin{smallmatrix}
a&b\\ cN &d
\end{smallmatrix} \bigr) \in \Gamma_0(N)$ so that $\gamma \cdot \frac{1}{w} = \frac{1}{w}$ means that $a+bw = \pm 1$, and $cN+dw = \pm w$ (with the same choice of $\pm$ sign).  Consider the $+$ sign case, the $-$ sign following by symmetry.  The former equation determines $a$ in terms of $b$, and the latter is equivalent to $cN' + dw' = w'$.  
Since $(N',w') = 1$, we have $w'|c$, so define $c=w'r$.
Then $d=1-rN'$.  Finally, the condition that $\gamma$ has determinant $1$ means $bw=-rN'$, which means $b=N''t$ (say) and $r=-\frac{w}{(w,N')} t$, giving that $\gamma$ takes the form as stated in \eqref{eq:stabilizerFormula}.  One may conversely check that any matrix of the form stated in \eqref{eq:stabilizerFormula} stabilizes $1/w$.

To show the final statement, one easily calculates that
$\sigma_{1/w}^{-1} \lambda_{1/w} \sigma_{1/w} = (\begin{smallmatrix} 1 & 1 \\ 0 & 1 \end{smallmatrix})$,
so \eqref{eq:scalingMatrixProperties} is satisfied.
        \end{proof}

\subsection{Statement of result}

	We compute the Fourier coefficients of $E_\c(z,u)$ at an Atkin-Lehner cusp $\a =1/r$ with $rs=N$, $(r,s) = 1$.  Write $\c = \frac{1}{w} =  \frac{1}{vf}$ where $f|N$ and $(v,N) = 1$.   
Let
\begin{equation}
\label{eq:fN''formula}
 N' = \frac{N}{f}, \qquad N'' = \frac{N'}{(f,N')},
\end{equation}
and write
\begin{equation}
\label{eq:frfsr's'Definitions}
 f_r = (f,r), \quad f_s = (f,s), \quad
 r = f_r r', \quad s= f_s s'.
\end{equation}
In addition, write
\begin{equation*}
 f_r = f_r' f_0, \quad \text{where} \quad (f_0, r') = 1, \quad \text{and} \quad f_r' | (r')^{\infty},
\end{equation*}
and similarly 
\begin{equation*}
 s' = s_f' s_0, \quad \text{where} \quad (s_0, f_s) = 1, \quad \text{and} \quad s_f' | f_s^{\infty}.
\end{equation*}

\begin{mytheo}
\label{thm:EisensteinFourierCoefficientFormulaDirichletCharacters}
 Let notation be as above.  Then $\phi_{\a \c}(n,u) = 0$ unless
 \begin{equation*}
n=\frac{f_r'}{(f_r',r')} \frac{s_f'}{(s_f',f_s)} k,
\end{equation*}
for some integer $k$.  In this case, write $k = k_r k_s \ell$, where 
\begin{equation*}
 k_r = (k, (f_r', r')), \qquad k_s = (k, (s_f', f_s)).
\end{equation*}
 Then
 \begin{multline}
\label{eq:phicuspsEisensteinFormulaWithDirichletCharacters}
\phi_{\a \c}(n,u) = \frac{S(\ell,0;s_0 f_0)}{(N'' s f_r^2)^u} 
\frac{f_r'}{(f_r', r')} 
\frac{s_f'}{(s_f', f_s)} 
\sum_{\substack{d|k \\ (d, f_s r') = 1}} d^{1-2u}
\frac{1}{\varphi(\frac{(f_r', r')}{k_r})}  
\frac{1}{\varphi(\frac{(s_f', f_s)}{k_s})}
\\
\sum_{\chi \shortmod{\frac{(f_r', r')}{k_r}}} 
  \sum_{\psi \shortmod{\frac{(s_f', f_s)}{k_s}}} 
  \frac{(\chi \psi)(\ell)
  \tau(\overline{\chi}) \tau(\overline{\psi})}{L(2u, \overline{\chi^2 \psi^2} \chi_0)}
  (\chi \psi)(\overline{s_0 f_0  d^2} v) 
  \chi(-k_s\overline{(s_f',f_s)})
  \psi(k_r \overline{(f_r',r')})
,
\end{multline}
where $\chi_0$ is the principal character modulo $f_s r'$.
\end{mytheo}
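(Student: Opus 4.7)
The plan is to evaluate $\phi_{\a\c}(n,u)$ directly from its definition \eqref{eq:phiDefinition} as a Dirichlet series
\[
\phi_{\a\c}(n,u) = \sum_{\gamma \in \mathcal{C}_{\c\a}} \frac{S_{\c\a}(0,n;\gamma)}{\gamma^{2u}},
\]
using the explicit scaling matrices \eqref{eq:AtkinLehnerScaling} for $\sigma_\a = \sigma_{1/r}$ and \eqref{eq:anyCuspScaling} for $\sigma_\c = \sigma_{1/w}$. Because the first argument is zero, each $S_{\c\a}(0,n;\gamma)$ reduces to a Ramanujan-type sum, so the task breaks into three steps: (i) parametrize $\Gamma_\infty \backslash \sigma_\c^{-1} \Gamma \sigma_\a / \Gamma_\infty$ and identify $\mathcal{C}_{\c\a}$, (ii) evaluate the resulting Ramanujan-type sums, and (iii) assemble the $\gamma$-series as a ratio of Dirichlet $L$-functions.

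For step (i) I would mimic the double coset analysis in the proof of Theorem \ref{thm:KloostermanCalculation}. Writing $\sigma_\a = \tau_r \nu_s$ and $\sigma_\c = \bsm 1 & 0 \\ w & 1 \esm \nu_{N''}$, the double coset equals $\nu_{N''}^{-1} \bsm 1 & 0 \\ -w & 1 \esm \Gamma\, \tau_r \nu_s$, and the integral sandwich $\bsm 1 & 0 \\ -w & 1 \esm \Gamma\, \tau_r$ can be described by reducing matrix entries modulo each prime-power factor of $N$ that is singled out by the factorizations \eqref{eq:frfsr's'Definitions} and \eqref{eq:fN''formula}. This yields both the allowed moduli $\mathcal{C}_{\c\a}$ (the conductor $\sqrt{N'' s}$ times an integer modulus with prescribed divisibilities by $r'$, $f_s$, $f_0$, $s_0$) and a convenient set of double coset representatives.

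For step (ii), I would compute the lower-right entry of $\sigma_\c \rho \sigma_\a^{-1}$ in order to identify the phase $e(n\delta/\gamma)$, as in the proof of Theorem \ref{thm:KloostermanCalculation}. With $m=0$ the sum over representatives at a fixed modulus splits, via the Chinese remainder theorem, into a product of local exponential sums: Ramanujan sums at the prime-powers dividing $f_r$ and $f_s$, and an ordinary Kloosterman-like sum at the part coprime to $f$ (which is where the factor $S(\ell,0;s_0 f_0)$ in the final answer will come from). The vanishing of $\phi_{\a\c}(n,u)$ unless $n$ has the stated shape falls out of the trivial vanishing of these local Ramanujan sums, while the divisor sum $\sum_{d|k,\,(d,f_s r')=1} d^{1-2u}$ emerges when one strips common factors from those Ramanujan sums. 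Expanding each residual Ramanujan sum via Gauss-sum orthogonality of the form $c_q(m) = \sum_{\chi \shortmod q} \tau(\overline\chi)\chi(m)$ introduces the two characters $\chi$ and $\psi$ modulo $(f_r',r')/k_r$ and $(s_f',f_s)/k_s$ appearing in \eqref{eq:phicuspsEisensteinFormulaWithDirichletCharacters}. Interchanging summation, the outer sum over the free integer parameter $c$ in the modulus becomes a twisted Dirichlet series $\sum_c (\chi\psi)(c)\chi_0(c) c^{-2u}$, which evaluates to $1/L(2u, \overline{\chi^2\psi^2}\chi_0)$ with $\chi_0$ the principal character modulo $f_s r'$, completing the identification.

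The main obstacle will be bookkeeping. Every prime divisor of $N$ belongs to one of several classes according to whether it divides $r$ or $s$ and according to how it appears in $f$ and in $w$, and each class contributes a distinct local factor. Maintaining the coprimality and divisibility conditions across steps (i)--(iii), correctly separating the contributions that assemble into the Dirichlet $L$-value from those that appear as Gauss sums, as the outer normalizing factor $(N'' s f_r^2)^{-u}$, or as the arithmetic pieces $\chi(-k_s \overline{(s_f',f_s)})$ and $\psi(k_r \overline{(f_r',r')})$, will require a careful prime-by-prime accounting guided by the dichotomies $f_r = f_r' f_0$ and $s' = s_f' s_0$. Once this is done consistently, the final form \eqref{eq:phicuspsEisensteinFormulaWithDirichletCharacters} should fall into place.
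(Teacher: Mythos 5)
Your outline follows essentially the same route as the paper: parametrize the double coset $\sigma_\c^{-1}\Gamma\sigma_\a$ by congruence conditions on the matrix entries, reduce to representatives determined by the lower row, split the resulting exponential sum by the Chinese remainder theorem into a Ramanujan sum at the modulus part coprime to the relevant factors of $f$ plus two restricted linear sums (whose vanishing forces $\frac{f_r'}{(f_r',r')}\frac{s_f'}{(s_f',f_s)}\mid n$), open the Ramanujan sum as a divisor sum, and convert the remaining additive characters to Dirichlet characters via Gauss sums. So the architecture is right, and your anticipated bookkeeping difficulties are exactly where the real work lies.

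However, the final assembly step as you describe it would not produce the stated answer. You claim the outer sum over the free modulus parameter is $\sum_c (\chi\psi)(c)\chi_0(c)c^{-2u}$ and that this "evaluates to $1/L(2u,\overline{\chi^2\psi^2}\chi_0)$"; that series is $L(2u,\chi\psi\chi_0)$, an $L$-function in the numerator, not a reciprocal. Two ingredients are missing. First, opening the Ramanujan sum at the coprime part of the modulus, $c_{C'}(k)=\sum_{d\mid(k,C')}d\,\mu(C'/d)$, leaves a M\"obius factor $\mu(C')$ attached to the residual sum over $C'$ after the change of variables that produces $\sum_{d\mid k}d^{1-2u}$; it is this $\mu(C')$ that inverts the $L$-function. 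Second, the additive characters depend on $C'$ through $\overline{C'^{2}}$ (one inverse of $C'$ comes from the congruence $D'\equiv\pm\overline{C'}w'$, another from the CRT reciprocity factor), so after conversion the characters enter as $\overline{\chi^2\psi^2}(C')$ rather than $(\chi\psi)(C')$. The correct residual series is $\sum_{(C',f_sr')=1}\mu(C')\overline{\chi^2\psi^2}(C')\,(C')^{-2u}=L(2u,\overline{\chi^2\psi^2}\chi_0)^{-1}$. Relatedly, the identity you quote, $c_q(m)=\sum_{\chi\shortmod q}\tau(\overline\chi)\chi(m)$, is not the one needed: what gets expanded is the additive character $e(a/q)$ with $(a,q)=1$ via $e(a/q)=\frac{1}{\varphi(q)}\sum_{\chi\shortmod q}\tau(\overline\chi)\chi(a)$, which is where the factors $\varphi\bigl(\tfrac{(f_r',r')}{k_r}\bigr)^{-1}$ and $\varphi\bigl(\tfrac{(s_f',f_s)}{k_s}\bigr)^{-1}$ in \eqref{eq:phicuspsEisensteinFormulaWithDirichletCharacters} come from.
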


The proof of Theorem \ref{thm:EisensteinFourierCoefficientFormulaDirichletCharacters} takes up the rest of the paper.

\subsection{Double cosets}
To begin, we obtain an explicit formula for the double coset appearing in \eqref{eq:phiDefinition}.	
	\begin{mylemma}
	\label{lemma:DoubleCosetFormula}
		Let $\c = 1/w$ be any cusp of $\Gamma=\Gamma_0(N)$ and $\a = 1/r$ an Atkin-Lehner cusp. 
		Let the scaling matrices be as in \eqref{eq:AtkinLehnerScaling} and 
		\eqref{eq:anyCuspScaling}. Then 
		\begin{equation}
		\label{eq:doublecoset}
			\sigma_\c\inv \Gamma \sigma_\a 
			= \left\{\bpm \frac{A}{N''} \sqrt{N'' s}   & \frac{B}{N''} \sqrt{N''/s}  \\ C \sqrt{N''s} &D\sqrt{N''/s}  \epm: 
			\bpm A & B \\ C & D \epm \in \SL_2(\Z),\,\,
			\begin{matrix}
			C \equiv - wA \mymod{r} \\ D \equiv - w B \mymod{s} 
			\end{matrix}
			\right\}.
		\end{equation}
	\end{mylemma}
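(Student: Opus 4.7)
The plan is to proceed by a direct computation, factoring out the width-normalizing diagonal parts of the scaling matrices so that the problem reduces to characterizing a certain inner $SL_2(\Z)$-matrix.

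First, write $\sigma_\c = \tau_w \nu_{N''}$ with $\tau_w = \bsm 1 & 0 \\ w & 1 \esm$ and $\nu_{N''} = \bsm \sqrt{N''} & 0 \\ 0 & 1/\sqrt{N''} \esm$, and similarly $\sigma_\a = \tau_r \nu_s$. For any $\gamma \in \Gamma_0(N)$, define
\[
\tilde{\gamma} := \tau_w^{-1} \gamma \tau_r = \bsm 1 & 0 \\ -w & 1 \esm \gamma \bsm 1 & (\overline{s}s-1)/r \\ r & \overline{s}s \esm.
\]
Each factor has integer entries and determinant $1$ (using $\overline{s}s \equiv 1 \pmod r$), so $\tilde{\gamma} \in \SL_2(\Z)$. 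Writing $\tilde{\gamma} = \bsm A & B \\ C & D \esm$, a short calculation using $\sigma_\c^{-1} \gamma \sigma_\a = \nu_{N''}^{-1} \tilde{\gamma}\, \nu_s$ shows the resulting matrix has exactly the shape displayed on the right-hand side of \eqref{eq:doublecoset}. This gives the forward inclusion, modulo verifying the two congruence constraints.

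Next, characterize which $(A,B,C,D) \in \SL_2(\Z)$ arise. Invert the transformation to write
\[
\gamma = \tau_w \tilde{\gamma}\, \tau_r^{-1} = \bsm 1 & 0 \\ w & 1 \esm \bsm A & B \\ C & D \esm \bsm \overline{s}s & -(\overline{s}s-1)/r \\ -r & 1 \esm,
\]
and observe that all four entries are automatically integers (again using $(\overline{s}s-1)/r \in \Z$). Hence the only remaining condition for $\gamma \in \Gamma_0(N)$ is that $N = rs$ divides the lower-left entry, which expands to $(wA+C)\overline{s}s - (wB+D)r$.

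Finally, reduce this divisibility condition modulo $r$ and modulo $s$ separately, invoking $(r,s)=1$ and $s \overline{s} \equiv 1 \pmod r$. Modulo $r$ this collapses to $C \equiv -wA \pmod r$, and modulo $s$ to $D \equiv -wB \pmod s$, which are precisely the two congruences in the lemma. The converse direction is then the same computation read backwards: any $(A,B,C,D) \in \SL_2(\Z)$ satisfying the two congruences yields a $\gamma \in \Gamma_0(N)$. The argument is entirely mechanical; the main obstacle is simply keeping the matrix bookkeeping tidy, in particular correctly identifying which of the two moduli is responsible for each of the two congruence conditions.
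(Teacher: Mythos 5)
Your proposal is correct and follows essentially the same route as the paper: the same factorization $\sigma_\c = \tau_w \nu_{N''}$, $\sigma_\a = \tau_r \nu_s$, reduction to the integral matrix $\tau_w^{-1}\gamma\tau_r \in \SL_2(\Z)$, and extraction of the two congruences by reducing modulo $r$ and modulo $s$ using $s\overline{s} \equiv 1 \pmod{r}$ and $(r,s)=1$. The only cosmetic difference is that you obtain both inclusions at once from the single condition $N \mid (\tau_w\tilde\gamma\tau_r^{-1})_{21}$, whereas the paper reduces the forward product modulo $r$ and $s$ directly and then verifies the converse separately; the computations are the same.
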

	
	\begin{proof}
		Let us call $\tau_{\c} = \bsm 1&0\\w&1\esm$, and $\nu_\c = \bsm \sqrt{N''} & 0 \\ 0& 1/\sqrt{N''} \esm$, so 
		$\sigma_\c = \tau_\c \nu_\c$. Let $\tau_r \nu_s$
		denote the decomposition of the scaling matrix in \eqref{eq:AtkinLehnerScaling}. Take 
		$\bsm a&b\\Nc&d \esm \in \Gamma$ and compute
		\begin{equation}\label{eq:doubleIntegralCoset}
			\bpm A&B \\ C &D \epm 
			= 
			\tau_\c\inv \bpm a&b\\Nc&d\epm \tau_r 
			= \bpm 1&0\\ -w &1 \epm \bpm a&b\\Nc&d \epm \bpm 1&\frac{s\overline{s} -1 }{r}\\ r& s\overline{s} \epm
			\in \SL_{2}(\Z).
		\end{equation}
		Note that
		\begin{equation*}
		 \nu_{\c}^{-1} \bpm A&B \\ C &D \epm \nu_s = \bpm \frac{A}{N''} \sqrt{N'' s}   & \frac{B}{N''} \sqrt{N''/s}  \\ C \sqrt{N''s} &D\sqrt{N''/s}  \epm.
		\end{equation*}

		Considering the product modulo $r$ gives 
		\[
			\bpm A&B\\ C & D \epm 
			\equiv \bpm 1&0\\-w&1 \epm \bpm a&b\\0&d\epm \bpm 1&*\\0&1 \epm 
			\equiv \bpm a&*\\ -aw &*\epm \pmod r,
		\]
		and hence $C \equiv - w A \pmod{r}$.
		Reducing modulo $s$, we obtain
		\[
			\bpm A&B\\ C & D \epm  
			\equiv \bpm 1&0\\ -w&1 \epm \bpm a&b\\0&d \epm \bpm 1&-\overline{r} \\ r & 0\epm 
			\equiv \bpm * &-a\overline{r}\\ * & wa\overline{r} \epm \pmod s,
		\]
		so $D \equiv - w B \pmod{s}$.
		
		Now we check that given $\bsm A&B\\ C & D \esm$ satisfying
		the conditions in \eqref{eq:doublecoset}, then it is covered by the products of the form in \eqref{eq:doubleIntegralCoset}. 
		For that purpose we compute 
		\[
			\tau_\c \bpm A & B \\ C &D \epm \tau_r\inv  
			= \bpm 1&0\\w&1 \epm \bpm A&B\\ C & D \epm \bpm s \overline{s} &\frac{1-s\overline{s}}{r} \\ -r &1\epm.
		\]
		Modulo $r$, the lower left entry of this product is congruent to $wA + C \equiv 0 \pmod r$, 
		and also congruent to $-B rw - D r = -r(B w + D) \equiv 0 \pmod s$. This implies that the
		lower left entry is divisible by $N$.
	\end{proof}

The next step towards evaluation of \eqref{eq:phiDefinition} is to work out representatives for $\Gamma_{\infty} \backslash \sigma_\c\inv \Gamma \sigma_\a / \Gamma_{\infty}$.  As a consistency check, note that the action of $\Gamma_{\infty}$ on both the left and right does not affect the congruences linking $A$ to $C$ and $B$ to $D$, which ultimately follows from $w^2 N'' \equiv 0 \pmod{N}$.
We need to find the set of pairs $C$, $D$ with $C > 0$, $(C,D) = 1$, $D \pmod{sC}$, for which there exist integers $A$, $B$ with $AD-BC=1$ and so that $C \equiv - w A \pmod{r}$ and $D \equiv -wB \pmod{s}$.

Before stating the result, we develop some notation.
Suppose that $A,B,C,D$ are as in the right hand side of \eqref{eq:doublecoset}.
Recall the notation from \eqref{eq:fN''formula}, \eqref{eq:frfsr's'Definitions}, recall $w = vf$, $(v,N) = 1$,
and note $f = f_r f_s$.   From $(A,C) = 1$ and $C \equiv - w A \pmod{r}$, we derive
$(C,r) = (w,r) = f_r$.  Similarly, $(D,s) = (w,s) = f_s$. Write 
\begin{equation}
\label{eq:C'D'r's'Definitions}
C = f_r C', \qquad r = f_r r', \qquad D = f_s D', \qquad s= f_s s',
\end{equation}
where $(C',r') = 1 = (D',s')$.
 Then we have
\begin{equation}
\label{eq:ABCDcongruences}
\begin{matrix}
			C \equiv - wA \mymod{r} \\ D \equiv - w B \mymod{s} 
			\end{matrix}
	\Longleftrightarrow
\begin{matrix}
			A \equiv - \overline{({w}/{f_r})} C' \mymod{r'} 
			\\ B \equiv  - \overline{({w}/{f_s})} D' \mymod{s'}.
			\end{matrix}		
\end{equation}
The equivalence of congruences in \eqref{eq:ABCDcongruences} only uses the assumptions $(A,C) = (B,D) = 1$.

From $AD-BC =1$, we have $D \equiv \overline{A} \pmod{|C|}$, which combined with the right hand side of \eqref{eq:ABCDcongruences} gives
\begin{equation*}
D \equiv - \overline{C'} \frac{w}{f_r}  \pmod{(f_r, r')}, 
\end{equation*}
using $(C,r') = (f_r C', r') = (f_r,r')$.  Similarly, we have $B \equiv - \overline{C} \pmod{|D|}$, so $D' \equiv \overline{C} \frac{w}{f_s}    \pmod{(f_s, s')}$. 

\begin{mylemma}
With notation as in Lemma \ref{lemma:DoubleCosetFormula} and its following discussion, we have the disjoint union
\begin{multline}
\label{eq:DoubleCosetRepresentativesLowerRow}
\Gamma_{\infty} \backslash \sigma_\c\inv \Gamma \sigma_\a / \Gamma_{\infty}
=
\delta_{\c \a} \Omega_{\infty}
\\
\cup 
\left\{ 
\begin{pmatrix}
* & * \\
C \sqrt{N'' s} & \frac{D}{s} \sqrt{N'' s} 
\end{pmatrix}
:
\begin{matrix}
			(C,r) = f_r \\ (D,s) =  f_s
			\end{matrix},
\quad
\begin{matrix}
D \equiv - \overline{C'} \frac{w}{f_r} \mymod{(f_r,r')}
\\
D' \equiv \overline{C} \frac{w}{f_s}  \mymod{(f_s, s')}
\end{matrix}
\right\},
\end{multline}
where in addition we have the restrictions $C > 0$,		
 $(C,D) = 1$ and $D$ runs over representatives $\pmod{sC}$.
  Here
$\delta_{\c \a} \Omega_{\infty}$ denotes $\Gamma_{\infty}$ if $\c$ is equivalent to $\a$, and denotes the empty set if $\c$ is not equivalent to $\a$.
  \end{mylemma}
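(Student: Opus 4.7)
The plan is to take the characterization of $\sigma_\c^{-1}\Gamma\sigma_\a$ from Lemma \ref{lemma:DoubleCosetFormula} and quotient by the bilateral $\Gamma_\infty$-action, parameterizing representatives by their lower row. First I would compute this action directly: left multiplication by $T^n = \bsm 1 & n \\ 0 & 1\esm$ sends $(A,B) \mapsto (A + nCN'', B + nDN'')$ while fixing $(C,D)$, while right multiplication by $T^m$ sends $(B,D) \mapsto (B + msA, D + msC)$ while fixing $(A,C)$; the element $-I$ flips all signs. So $C$ is a bilateral invariant (normalized to $C>0$ via $-I$), and $D$ is defined modulo $sC$. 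The case $C = 0$ forces $A = D = \pm 1$ by $\det=1$, and the congruences of Lemma \ref{lemma:DoubleCosetFormula} then force $r\mid w$ and $(w,s)=1$, i.e.\ $f_r=r$, $f_s=1$, which by Proposition \ref{prop:cuspParameterization} is precisely the condition $\c \sim \a$; this produces the $\delta_{\c\a}\Omega_\infty$ summand.

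Next I would derive the necessary conditions on a valid pair $(C,D)$. Since $(A,C)=1$ is a consequence of $\det=1$ and $(w,r)=f_r$, the congruence $C\equiv -wA\pmod r$ immediately gives $(C,r)=f_r$, and symmetrically $(D,s)=f_s$; and $(C,D)=1$ is automatic. Introducing $C=f_rC'$, $D=f_sD'$ with $(C',r')=(D',s')=1$, the original congruences rearrange to $A \equiv -\overline{w/f_r}\,C' \pmod{r'}$ and $B\equiv -\overline{w/f_s}\,D' \pmod{s'}$. Reducing $AD\equiv 1\pmod C$ modulo the divisor $(f_r,r')$ of both $C$ and $r'$, and substituting the $A$-congruence, yields $D\equiv -\overline{C'}(w/f_r) \pmod{(f_r,r')}$; symmetrically, reducing $BC\equiv -1\pmod D$ modulo $(f_s,s')$ using the $B$-congruence yields $D'\equiv \overline{C}(w/f_s) \pmod{(f_s,s')}$. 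These are exactly the two compatibility conditions stated in the lemma.

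For sufficiency I would, given $(C,D)$ obeying all the conditions, construct a representative by CRT and check uniqueness of the double coset. The three congruences pinning $A$ are $A\equiv \overline{D}\pmod C$, $A\equiv -\overline{w/f_r}\,C'\pmod{r'}$, and, through the relation $B=(AD-1)/C$, the $B$-congruence which actually restricts $A$ modulo $s'C/(f_s,s')$ (not merely modulo $s'/(f_s,s')$). The two compatibility conditions are exactly what make these three pairwise consistent. The crucial arithmetic identity is
\[
 \operatorname{lcm}\bigl(C,\; r',\; s'C/(f_s,s')\bigr) \;=\; CN'',
\]
which follows from $N'' = r's'/[(r',f_r)(s',f_s)]$ combined with $(r',s')=1$, $(C,r')=(f_r,r')$, and $(r',s'/(f_s,s'))=1$. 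Since the left $\Gamma_\infty$-action moves $A$ by arbitrary multiples of $CN''$ (and then $B$ is forced by $(A,C,D)$), this identity exactly matches the residual freedom in $A$ to the $\Gamma_\infty$-action, so each valid $(C,D)$ indexes a single double coset. The main obstacle is precisely this $\operatorname{lcm}$ identity: one must notice that the $B$-congruence controls $A$ modulo $s'C/(f_s,s')$ rather than the weaker modulus $s'/(f_s,s')$ (the factor of $C$ is essential), and then carefully track the coprimalities implied by $(r,s)=1$ and by the definition of $N''$ to close the identity.
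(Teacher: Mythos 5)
Your proposal is correct, and its core follows the paper's own route: derive the conditions $(C,r)=f_r$, $(D,s)=f_s$ and the two compatibility congruences on $D$ from $AD-BC=1$ together with the congruences of Lemma \ref{lemma:DoubleCosetFormula}, then run a Chinese-remainder argument in the converse direction. The paper organizes the sufficiency step differently: it picks any $A_0,B_0$ with $A_0D-B_0C=1$, observes that the compatibility conditions force $A_0\equiv -\overline{(w/f_r)}C'\pmod{(f_r,r')}$ and $B_0\equiv -\overline{(w/f_s)}D'\pmod{(f_s,s')}$, and then corrects by $A=A_0+nC$, $B=B_0+nD$ with $n$ chosen via CRT modulo $r's'$ (using $(C',r')=(D',s')=(r',s')=1$); your version instead solves a three-modulus system for $A$ directly. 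Where you go beyond the paper is in items the paper leaves implicit: the explicit computation of the two-sided $\Gamma_\infty$-action (hence that $D$ runs mod $sC$), the identification of the $C=0$ stratum with $\delta_{\c\a}\Omega_\infty$, and above all the uniqueness claim that each admissible pair $(C,D\bmod sC)$ indexes exactly one double coset, which you obtain from the identity $\operatorname{lcm}(C,r',s'C/(f_s,s'))=CN''$. That identity is correct (it reduces to $N''=r's'/[(f_r,r')(f_s,s')]$ and $(r',s')=1$), and it is a nice consistency check, but it is more work than needed: the standard argument of \cite[Theorem 2.7]{IwaniecSpectralBook}, which the paper invokes, gives uniqueness for free, since two elements of $\sigma_\c\inv\Gamma\sigma_\a$ with the same lower row have a quotient that is unipotent upper-triangular and lies in $\sigma_\c\inv\Gamma\sigma_\c$, hence fixes $\infty$ and therefore lies in $\sigma_\c\inv\Gamma_\c\sigma_\c=\Gamma_\infty$. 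So your argument is sound and somewhat more self-contained, at the cost of a heavier arithmetic verification in the uniqueness step.
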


\begin{proof}
The discussion preceding the statement of the lemma shows that any matrix given on the right hand side of \eqref{eq:doublecoset} with $C >0$ gives rise to a double coset of the claimed form.  It suffices to show that given integers $C,D$ as in the second line of \eqref{eq:DoubleCosetRepresentativesLowerRow}, we may find $A,B$ so that $AD-BC=1$ and satisfying the congruences in \eqref{eq:ABCDcongruences}.

From $(C,D) = 1$, we may choose $A_0, B_0$ so that $A_0 D - B_0 C = 1$.  From $A_0 \equiv \overline{D} \pmod{C}$, and the congruence on $D$ given in \eqref{eq:DoubleCosetRepresentativesLowerRow}, we have
\begin{equation*}
A_0 \equiv - (\overline{w/f_r}) C' \pmod{(f_r,r')}.
\end{equation*}
Hence, there exist integers $x,y$ so that $A_0 + C' \overline{(w/f_r)} = f_r x + r' y$.   A similar argument with $B_0$ gives
\begin{equation*}
B_0 \equiv - (\overline{w/f_s}) D' \pmod{(f_s, s')},
\end{equation*}
and so there exist integers $X,Y$ so that $B_0 + D' \overline{(w/f_s)} = f_s X + s' Y$.

We next want to find $n \in \mathbb{Z}$ so that $A = A_0 + nC = A_0 + nC' f_r$ and $B = B_0 + nD = B_0 + nD' f_s$ 
satisfies the right hand side of \eqref{eq:ABCDcongruences}.  Gathering the above formulas, we have
\begin{align*}
A = A_0 + nC &= -\overline{(w/f_r)} C' + f_r(x+nC') + r'y \\
B = B_0 + nD &= -\overline{(w/f_s)} D' + f_s(X+nD') + s'Y.
\end{align*}
We may choose $n$ so that $n \equiv -\overline{C'}x \pmod{r'}$ and $n \equiv - \overline{D'} X \pmod{s'}$, since $(C',r') = 1 = (D',s') =(r', s')$, and by the Chinese remainder theorem.  With this choice of $n$, then $A$ and $B$ satisfy the congruences on the right hand side of \eqref{eq:ABCDcongruences}.
\end{proof}

Finally we can evaluate $\phi_{\a \c}(n,u)$.  First, note that the congruence $D \equiv - \overline{C'} \frac{w}{f_r} \pmod{(f_r, r')}$ is equivalent to $D' \equiv - \overline{C'} \overline{f_s} \frac{w}{f_r} \pmod{(f_r, r')}$, and that we can write $w = f_r f_s w'$, giving now $D' \equiv -\overline{C'} w' \pmod{(f_r, r')}$.  Similarly, the other congruence in \eqref{eq:DoubleCosetRepresentativesLowerRow} is equivalent to $D' \equiv  \overline{C'} w' \pmod{(f_s, s')}$.  

Putting everything together, we have
\begin{equation}
\label{eq:phiacCalculation}
\phi_{\a \c}(n,u) = \frac{1}{(N'' s f_r^2)^u} \sum_{(C', f_s r') = 1} \frac{1}{(C')^{2u}} 
\sumstar_{\substack{D' \shortmod{s' f_r C'} 
	\\  D' \equiv - \overline{C'} w' \shortmod{(f_r, r')} 
	\\   D' \equiv  \overline{C'} w' \shortmod{(f_s, s')} 
	}} e\Big(\frac{nD'}{s'f_r C'}\Big).
\end{equation}
Now write 
\begin{equation*}
 f_r = f_r' f_0, \quad \text{where} \quad (f_0, r') = 1, \quad \text{and} \quad f_r' | (r')^{\infty},
\end{equation*}
and similarly 
\begin{equation*}
 s' = s_f' s_0, \quad \text{where} \quad (s_0, f_s) = 1, \quad \text{and} \quad s_f' | f_s^{\infty}.
\end{equation*}
Then $(f_r, r') = (f_r',r')$, and $(f_s, s') = (f_s, s_f')$, and so
\begin{equation*}
\phi_{\a \c}(n,u) = \frac{1}{(N'' s f_r^2)^u} \sum_{(C', f_s r') = 1} \frac{1}{(C')^{2u}} 
\sumstar_{\substack{D' \shortmod{s_0 f_0 C' f_r' s_f'} 
	\\  D' \equiv - \overline{C'} w' \shortmod{(f_r', r')} 
	\\   D' \equiv  \overline{C'} w' \shortmod{(f_s, s_f')} 
	}} e\Big(\frac{nD'}{s_0 f_0 C' f_r' s_f'}\Big).
\end{equation*}
By the Chinese remainder theorem, this factors as
\begin{multline*}
\phi_{\a \c}(n,u) = \frac{1}{(N'' s f_r^2)^u} \sum_{(C', f_s r') = 1} \frac{S(n,0;C' s_0 f_0)}{(C')^{2u}} 
\\
\times \Big(
\sumstar_{\substack{D' \shortmod{f_r'} 
	\\  D' \equiv - \overline{C'} w' \shortmod{(f_r', r')}  
	}} e\Big(\frac{nD' \overline{s_0 f_0 C' s_f'}}{ f_r' }\Big)
	\Big)
	\Big(
\sumstar_{\substack{D' \shortmod{s_f'} 
	\\  D' \equiv  \overline{C'} w' \shortmod{(f_s, s_f')}  
	}} e\Big(\frac{nD' \overline{s_0 f_0 C' f_r'}}{ s_f' }\Big)	
	\Big)
	.
\end{multline*}
For the sum modulo $f_r'$, note that $(D', f_r') = 1$ if and only if $(D', (f_r', r')) = 1$, since $f_r' | r'^{\infty}$.  Therefore, the congruence automatically implies $(D', f_r') = 1$, and so this condition may be dropped.  Then after changing variables, it becomes a linear exponential sum which is easy to evaluate.  A similar discussion holds for the modulus $s_f'$.  In this way, we obtain
\begin{multline*}
\phi_{\a \c}(n,u) = \frac{1}{(N'' s f_r^2)^u} 
\frac{f_r'}{(f_r', r')} 
\delta\Big(\frac{f_r'}{(f_r',r')} | n\Big)
\frac{s_f'}{(s_f', f_s)} 
\delta\Big(\frac{s_f'}{(s_f', f_s)} | n\Big)
\\
\sum_{(C', f_s r') = 1} \frac{S(n,0;C' s_0 f_0)}{(C')^{2u}} 
e\Big(\frac{-n w' \overline{ s_0 f_0 s_f'   C'^2}  }{f_r'} \Big)  
e\Big(\frac{n w' \overline{  s_0  f_0 f_r' C'^2} }{s_f'} \Big) .
\end{multline*}
Now write 
\begin{equation*}
n=\frac{f_r'}{(f_r',r')} \frac{s_f'}{(s_f',f_s)} k,
\end{equation*}
and note $(\frac{f_r'}{(f_r', r')} \frac{s_f'}{(s_f',f_s)}, C' s_0 f_0) = 1$
giving
\begin{multline*}
\phi_{\a \c}(n,u) = \frac{S(k,0;s_0 f_0)}{(N'' s f_r^2)^u} 
\frac{f_r'}{(f_r', r')} 
\frac{s_f'}{(s_f', f_s)} 
\\
\sum_{(C', f_s r') = 1} \frac{S(k,0;C')}{(C')^{2u}} 
e\Big(\frac{-k w' \overline{ s_0 f_0 (s_f', f_s)   C'^2}  }{(f_r',r')} \Big)  
e\Big(\frac{k w' \overline{ s_0  f_0(f_r', r')  C'^2} }{(s_f',f_s)} \Big) .
\end{multline*}
Next we use the evaluation of the Ramanujan sum as a divisor sum, giving
\begin{multline*}
\phi_{\a \c}(n,u) = \frac{S(k,0;s_0 f_0)}{(N'' s f_r^2)^u} 
\frac{f_r'}{(f_r', r')} 
\frac{s_f'}{(s_f', f_s)} 
\sum_{\substack{d|k \\ (d, f_s r') = 1}} d^{1-2u}
\\
\sum_{(C', f_s r') = 1} \frac{\mu(C')}{(C')^{2u}} 
e\Big(\frac{-k w'\overline{ s_0 f_0 (s_f', f_s) d^2   C'^2}   }{(f_r',r')} \Big)  
e\Big(\frac{k w' \overline{ s_0  f_0(f_r', r') d^2  C'^2}   }{(s_f',f_s)} \Big) .
\end{multline*}

Our next goal is to change the additive character into multiplicative ones.  This requires placing the fractions in lowest terms.

Tracing back the definitions, we may check that $(w', (f_r', r')) = 1$, since this is equivalent to $(w', r') = 1$, which is true by the definitions of $w'$ and $r'$.  Similarly, $(w', (s_f', f_s)) = 1$.  Next write $k = k_r k_s \ell$, where
\begin{equation*}
 k_r = (k, (f_r', r')), \qquad k_s = (k, (s_f', f_s)).
\end{equation*}
After this, we may convert to Dirichlet characters (see \cite[(3.11)]{iwaniec2004analytic}).  In all, we obtain
\begin{multline}
\phi_{\a \c}(n,u) = \frac{S(\ell,0;s_0 f_0)}{(N'' s f_r^2)^u} 
\frac{f_r'}{(f_r', r')} 
\frac{s_f'}{(s_f', f_s)} 
\sum_{\substack{d|k \\ (d, f_s r') = 1}} d^{1-2u}
\frac{1}{\varphi(\frac{(f_r', r')}{k_r})}  
\frac{1}{\varphi(\frac{(s_f', f_s)}{k_s})}
\\
\sum_{\chi \shortmod{\frac{(f_r', r')}{k_r}}} 
  \sum_{\psi \shortmod{\frac{(s_f', f_s)}{k_s}}} 
  \frac{(\chi \psi)(\ell)
  \tau(\overline{\chi}) \tau(\overline{\psi})}{L(2u, \overline{\chi^2 \psi^2} \chi_0)}
  (\chi \psi)(\overline{s_0 f_0  d^2} w') 
  \chi(-k_s\overline{(s_f',f_s)})
  \psi(k_r \overline{(f_r',r')})
,
\end{multline}
where $\chi_0$ is the principal character modulo $f_s r'$.  This completes the proof of Theorem \ref{thm:EisensteinFourierCoefficientFormulaDirichletCharacters}.

\bibliographystyle{amsalpha}
\bibliography{MehmetBib3}

\providecommand{\bysame}{\leavevmode\hbox to3em{\hrulefill}\thinspace}
\providecommand{\MR}{\relax\ifhmode\unskip\space\fi MR }
\providecommand{\MRhref}[2]{%
  \href{http://www.ams.org/mathscinet-getitem?mr=#1}{#2}
}
\providecommand{\href}[2]{#2}
\begin{thebibliography}{GHL15}

\bibitem[AL70]{Atkin-Lehner}
A.~O.~L. Atkin and J.~Lehner, \emph{Hecke operators on {$\Gamma _{0}(m)$}},
  Math. Ann. \textbf{185} (1970), 134--160. \MR{0268123}

\bibitem[BM15]{BlomerMilicevic2015KloostermanSumsInResidue}
Valentin Blomer and Djordje Mili{\'c}evi{\'c}, \emph{Kloosterman sums in
  residue classes}, J. Eur. Math. Soc. (JEMS) \textbf{17} (2015), no.~1,
  51--69. \MR{3312403}

\bibitem[DI83]{DeshouillersIwaniec}
J.-M. Deshouillers and H.~Iwaniec, \emph{Kloosterman sums and {F}ourier
  coefficients of cusp forms}, Invent. Math. \textbf{70} (1982/83), no.~2,
  219--288. \MR{684172}

\bibitem[Dra15]{drappeau2015sums}
Sary Drappeau, \emph{Sums of {K}loosterman sums in arithmetic progressions, and
  the error term in the dispersion method}, arXiv preprint arXiv:1504.05549
  (2015).

\bibitem[GHL15]{GoldfeldHundleyLee2015Multiplicative}
Dorian Goldfeld, Joseph Hundley, and Min Lee, \emph{Fourier expansions of
  {$GL(2)$} newforms at various cusps}, Ramanujan J. \textbf{36} (2015),
  no.~1-2, 3--42. \MR{3296708}

\bibitem[IK04]{iwaniec2004analytic}
Henryk Iwaniec and Emmanuel Kowalski, \emph{Analytic number theory}, American
  Mathematical Society Colloquium Publications, vol.~53, American Mathematical
  Society, Providence, RI, 2004. \MR{2061214}

\bibitem[Iwa97]{IwaniecClassicalBook}
Henryk Iwaniec, \emph{Topics in classical automorphic forms}, Graduate Studies
  in Mathematics, vol.~17, American Mathematical Society, Providence, RI, 1997.
  \MR{1474964}

\bibitem[Iwa02]{IwaniecSpectralBook}
\bysame, \emph{Spectral {{M}}ethods of {{A}}utomorphic {{F}}orms}, second ed.,
  Grad. Stud. Math., vol.~53, Amer. Math. Soc., Providence, RI, 2002.
  \MR{1942691}

\bibitem[Kuz80]{KuznetsovSumOfKloosterman}
N.~V. Kuznecov, \emph{The {P}etersson conjecture for cusp forms of weight zero
  and the {L}innik conjecture. {S}ums of {K}loosterman sums}, Mat. Sb. (N.S.)
  \textbf{111(153)} (1980), no.~3, 334--383, 479. \MR{568983}

\bibitem[KY17]{KiralYoung5thMoment}
Eren~Mehmet K{\i}ral and Matthew Young, \emph{The fifth moment of modular
  {$L$}-functions}, arXiv preprint arXiv:1701.07507 (2017).

\bibitem[Mot07]{motohashi2007riemann}
Yoichi Motohashi, \emph{The {R}iemann zeta-function and {H}ecke congruence
  subgroups. {II}}, arXiv preprint arXiv:0709.2590 (2007).

\end{thebibliography}

\end{document}